\newtheorem{thm}[subsection]{Theorem}
\newtheorem{lem}[subsection]{Lemma}
\newtheorem{cor}[subsection]{Corollary}
\newtheorem{prop}[subsection]{Proposition}
\newtheorem{obs}[subsection]{Observation}
\newtheorem{conj}[subsection]{Conjecture}
\theoremstyle{definition}
\newtheorem{definition}[subsection]{Definition}
\newtheorem{example}[subsection]{Example}
\newtheorem{remark}[subsection]{Remark}
\newcommand{\R}{\mathbf{R}}
\newcommand{\C}{\mathbf{C}}
\newcommand{\US}{\mathbf{S}}
\numberwithin{equation}{subsection}
\def\a{\alpha}
\def\b{\beta}
\def\g{\gamma}
\def\k{\kappa}
\def\eps{\varepsilon}
\def\i{\mathrm{i}}
\def\w{\omega}
\def\th{\theta}
\def\Lap{\Delta}
\def\can#1{{\Gamma_{#1}}}
\def\On#1{\mathrm{O}(#1)}
\def\sz#1#2{\mathcal{S}_{#1}\!\left(#2\right)}
\def\szd#1#2{\mathcal{S}_{#1}^{*}\!\left(#2\right)}
\def\G#1{\mathrm{G}_{#1}}
\def\E#1#2{\mathcal{E}^{#1}\left(#2\right)}
\def\F{\mathcal{F}}
\def\I{\mathcal{I}}
\def\bw#1{\textstyle{\bigwedge_{#1}}}
\def\^{\wedge}
\def\ip#1#2{\langle#1,#2\rangle}
\def\#{\sharp}
\def\rad{\mathcal{R}}
\def\dar{\mathcal{R}^*}
\def\wtp{\omega_{\lambda,\th}}
\def\knorm#1#2{\left\|#1\right\|_{({#2})}}
\def\tr{\mathrm{tr}}
\def\sv#1{|\US^{#1-1}|}
\def\bn#1#2{\mathrm{Ch}({#1},{#2})}
\def\hom#1{\mathrm{Hom}(#1)}
\def\dit{\,.\,}
\def\bd{\partial}
\newcounter{mnotecount}[page]
\begin{document}

\parskip 5pt
\parindent 0pt
\baselineskip 16pt

\title[X-rays of forms/projections of currents]{X-rays of forms\\and projections of currents}

\author{Bruce Solomon}

\address{Math Department, Indiana University,
Bloomington, IN 47405}
\email{solomon@indiana.edu}
\urladdr{mypage.iu.edu/$\sim$solomon}

\date{This version begun May 12, 2009; Last Typeset \today.}

\begin{abstract}
We study a new Radon-like transform that averages 
projected $p$-forms in $\,\R^{n}\,$ over affine $(n-k)$-spaces. We then prove an 
explicit inversion formula for our transform on the space of rapidly-decaying smooth $p$-forms. Our transform differs from the one in \cite{ggs}. 
Moreover, if it can be extended to a somewhat larger space
of $p$-forms, our inversion formula will allow the synthesis of any rapidly-decaying
smooth $p$-form on $\,\R^{n}\,$ as a (continuous) superposition of pullbacks from 
$p$-forms on $k$-dimensional subspaces. In turn, such synthesis implies an explicit 
formula (which we derive) for reconstructing compactly supported currents in $\,\R^{n}\,$
(e.g., compact oriented $k$-dimensional subvarieties) from their oriented projections onto 
$k$-planes.
\end{abstract}

\maketitle

\section{introduction}\label{sec:intro}

\textbf{Question:} 
Do the projections of an oriented $p$-dimensional object in $\,\R^{n}\,$ into $k$-dimensional 
subspaces---cancellation allowed---determine it?  

\textbf{Question:}
Can one use its projections to explicitly reconstruct the object?

Here, these questions prompt us to define a new integral transform of 
``Radon'' type for differential $p$-forms, and to derive a formula that inverts it on the 
space of rapidly-decaying $p$-forms. Our inversion formula doesn't quite settle the questions
above, but it comes very close: We believe it shows exactly what their answers have to be.

By no means are we first to construct a tomography of $p$-forms: 
In the late 1960's, Gelfand, Graev and Shapiro did so in \cite{ggs}, 
and their work was followed by a series of related papers (e.g., \cite{ggg}). 
But our theory complements theirs, and unlike it, bears directly on the questions above.

To understand our transform, and see how it differs from the one in \cite{ggs}, 
recall that any a linear $k$-plane $\,P\,$,
together with its orthogonal complement $\,P'\,$, splits $\,\R^{n}\,$ as the direct sum $\,\R^{n}=P\oplus P'\,$. 
The exterior algebras $\,\bw{*}{P}\,$ and $\,\bw{*}{P'}\,$ then both form subspaces of the larger exterior
algebra $\,\bw{*}{\R^{n}}\,$. Note, however, that their sum does \emph{not} generally {span} 
$\,\bw{*}{\R^{n}}\,$.

Now consider a continuous $p$-form $\,\a\,$ in $\,\R^{n}\,$, and suppose it decays 
fast enough to make it integrable over any affine $\,(n-k)$-plane. Our transform assigns, 
to each $\,\xi\in P\,$, a value $\,\rad_{k}\a(P,\xi)\in\bw{p}{P}\,$, by projecting $\,\a(x)\,$ into 
$\,\bw{p}{P}\,$ at each $\,x\in P'\,$, and then integrating along $\,P'\,$. 
We emphasize that $\,\rad_{k}\a(P,\xi)\,$ thus belongs to $\,\bw{p}{P}\,$.

Like ours, the transform in \cite{ggs} integrates along $\,P'\,$.
But it does so after projecting $\,\a(x)\,$ into $\,\bw{p}{P'}\,$---not
$\,\bw{p}{P}\,$--- at each $\,x\in P'\,$. Accordingly, their transform assigns, to each $\,\xi\in P\,$,
a value in $\,\bw{p}{P'}\,$ instead of $\,\bw{p}{P}\,$. 
Though procedurally similar to ours, this process encodes very different information 
about the input $p$-form $\,\a\,$.

For instance, when $\,\a\,$ is a 1-form in $\,\R^{3}\,$, our 2-plane transform
(which integrates along lines perpendicular to each linear 2-plane) is 
\emph{invertible}, in the sense that we can recover any rapidly decaying 1-form $\,\a\,$ 
from its transform $\,\rad_{2}\a\,$. The transform in \cite{ggs}, contrastingly, is \emph{not} invertible 
in that case. For it annihilates all exact 1-forms. Indeed, its integration step 
computes standard line integrals, and the line integral of a rapidly decaying exact form must
vanish.

Further, the low dimensions of the example just given oversimplify the distinction between
our transform and that of \cite{ggs}. For when $\,k=n-1=2\,$, and $\,p=1\,$, the inclusion 
\begin{equation*}
	\bw{p}{P}\oplus\bw{p}{P'}\subset\bw{p}{\R^{n}}
\end{equation*}
becomes an isomorphism. In that case, adding our transform to the one in \cite{ggs} 
gives the classical (trivially vectorized) Radon transform. Roughly speaking, 
that puts any one of these transforms in the span of the other two,
in which case our work produces nothing fundamentally new. 
But in general---in fact, whenever $\,k>p>1\,$---the inclusion above is proper, 
and this makes our transform truly inequivalent to that of \cite{ggs}.

To explain why our transform and inversion formula are not just new, but also interesting,
we revisit our opening questions. The oriented $p$-dimensional objects we have in mind
there are compactly supported \emph{$p$-dimensional currents}, in the sense of deRham. 
As we explain in \S6, they belong to the dual of the space of smooth differential 
$p$-forms, and they project onto $k$-dimensional subspaces in a simple, natural 
way. Their relationship to $p$-forms makes the problem of reconstructing a current from its projections
equivalent to that of writing any rapidly decaying  $p$-form as a \emph{superposition} of forms that
are \emph{$k$-planar}, that is, pullbacks of $p$-forms from $k$-dimensional subspaces via orthogonal
projection. Formally, one gets precisely the right type of superposition formula from our inversion result,
and when $\,p=0\,$ (where our whole theory reduces to that of the classical scalar Radon 
transform) this formula is known to hold. We feel sure that it holds for $\,p>0\,$ too,
and state the expected facts, along with the answers they give to our opening questions, as explicit
conjectures after proving our inversion result (Theorem \ref{thm:inversion}).
We hope that our work tempts someone having more skill with the required analytical tools 
to tie up these loose ends.

Meanwhile, discovering the appropriate tools, deriving the correct inversion formula, and proving its 
validity on the smaller space of rapidly decaying $p$-forms will occupy us here. 
Before beginning, we briefly preview our plan of attack.

Section \ref{sec:prelims} sets preliminary concepts and notation dealing mainly with exterior algebra and the
Fourier transform. The material is routine, except possibly for Propositions \ref{prop:ftd} and 
\ref{prop:symbols2}, which show how the Fourier transform interwines
 $\,d\,$, $\,\delta\,$, $\,d\delta\,$, and $\,\delta d\,$ with certain operations in the exterior algebra. 
These elegant formulae are easy, but we have not seen them elsewhere, and they
combine to intertwine the half-laplacians $\,d\delta\,$ and $\,\delta d\,$ with the projections we 
call $\,\Pi\,$ and $\,\Phi\,$ in  $\,\bw{*}{\R^{n}}\,$. We later exploit these facts to derive Corollary 
\ref{cor:even}, a simpler ``differential'' statement of our inversion formula valid for even
codimension.

In \S\ref{sec:rad}, we define our transform $\rad_{k}\,$ and its dual $\,\dar_{k}\,$.
We work out a couple examples along the way to clarify definitions 
and suggest the typical behavior of these operators.

In \S \ref{sec:conv}, we start the core work of our paper by deriving an explicit formula 
for the composition $\,\dar_{k}\circ\rad_{k}\,$ as a convolution
(Theorem \ref{prop:conForm}). This type of result---and its utility for inverting a Radon 
transform---were first noted by B.~Fuglede in \cite{fuglede}. Indeed, our formula reduces to Fuglede's result 
when the $p$-forms on which it operates are mere scalar functions ($p=0$). But the precise generalization 
for $\,p>0\,$ requires a more sophisticated analysis focused on the action of the orthogonal group $\,\On{n}\,$
on the exterior algebra $\,\bw{*}{\R^{n}}$. The resulting convolution formula thus forms the first 
significant contribution of our paper.

The other really new element is our computation of the precise Fourier transform 
of $\,\dar_{k}\circ\rad_{k}\,$, obtained as Theorem \ref{thm:fpi} in \S \ref{sec:FT}. 
Roughly speaking, we carry it out by expanding the convolution kernel
from \S \ref{sec:conv} as a sum of quotients of spherical harmonics by powers of the radial 
distance  $\,r\,$. We then use a well-known lemma from Stein \cite{stein} to compute the Fourier 
transforms of these quotients. 

We conclude in \S \ref{sec:inv} by combining the  convolution formula and our Fourier transform 
computation to invert $\,\rad_{k}\,$ (Theorem \ref{thm:inversion}). 
When the codimension $\,n-k\,$ is even,
our inversion formula simplifies: The intertwining rules at the end
of \S\ref{sec:prelims} replace the Fourier transforms
and projections there to simple linear combinations of the ``half-laplacians'' $\,d\delta\,$ and 
$\,\delta d\,$. Corollary \ref{cor:even} states the precise result.

Finally, our Inversion Theorem leads quickly to the formal superposition result mentioned earlier:
We state it as Conjecture \ref{cor:super}, and then show that it would 
answer our opening questions. We present those answers too, as Conjecture \ref{thm:proj}.

We completed this project with sabbatical support from Indiana University, a Lady Davis fellowship at the Technion in Haifa, Israel, and extended hospitality from Stanford University. We deeply appreciate the assistance of all three institutions.



\section{Preliminaries}\label{sec:prelims}

For later reference, we record basic notation and background here, mostly about the 
exterior algebra, the Fourier transform, and their interaction. The reader may prefer  
skipping to \S\ref{sec:rad}, and return here for clarification when the need arises. 

\subsection{Subspaces and complements.} 
Let $\,\G{n,k}\,$  denote the compact Grassmann manifold of all 
$k$-planes in $\,\R^{n}\,$.
We always denote the orthogonal complement of a linear $k$-plane $\,P\in\G{n,k}\,$ by $\,P'\,$, 
and we write $\,k'\,$  for $\,\dim P' = \mathrm{codim}\,P=n-k\,$. 

\subsection{Exterior algebra.}\label{ssec:bw} 
We write $\,\bw{*}V\,$ for the \textbf{exterior algebra} 
of a  finite-dimensional vector space $\,V\,$. Chapter 1 of
Federer's book \cite{federer} gives a careful (albeit terse) 
development of the exterior algebra from scratch. 
Here we review just a few relevant points.

First, $\,\bw{*}V\,$ has the direct sum decomposition
\begin{equation*}\label{eqn:bwds}
	\bw{*}V = \bigoplus_{k=0}^{n}\bw{k}V\ ,
\end{equation*}
where $\,\bw{k}V\,$ denotes the subspace generated by $k$-fold wedge products of the form 
$\,v_{1}\wedge v_{2}\wedge\cdots\wedge v_{k}\,$, with $\,v_{i}\in V\,$.
We call the elements of $\,\bw{k}V\,$ \textbf{$k$-vectors}.

Given an orthonormal basis $\,e_{1}, e_{2},\dots, e_{n}\,$ for
$\,V\,$, we get a basis for $\,\bw{k}V\,$  comprising the 
$\,\binom{n}{k}\,$ $k$-vectors
\begin{equation}\label{eqn:basis}
	e_{\lambda}:=
	e_{\lambda_{1}}\wedge e_{\lambda_{2}}\wedge\cdots e_{\lambda_{k}}\ ,
\end{equation}
where $\,\lambda\,$ runs over all possible {increasing}, length-$k$ subsequences of 
$\,\left\{1,2,\cdots,n\right\}\,$. Declaring this basis orthonormal extends the inner 
product on $\,V\,$ to an inner product $\,\ip{\cdot}{\cdot}\,$ on 
$\,\bw{*}V\,$

We get a linear transformation on $\,\bw{*}V\,$ by left-wedging with any fixed vector $\,v\in V\,$. 
We indicate the adjoint of this mapping (relative to the inner product above on $\,\bw{*}{V}\,$)
by ``$\vee\,v$'' and define it by requiring
\begin{equation}\label{eqn:vee}
	\langle \a\vee v,\ \b\rangle := \langle\a,\ v\wedge\b\rangle
\end{equation}
for all $\,\a,\b\in\bw{*}V\,$.

Any linear map $\,f:V^{}\to W^{}\,$ between vector spaces induces a multilinear extension between 
their respective exterior algebras via the simple formula
\[
v_{1}\^v_{2}\^\cdots\^v_{k}\longmapsto f(v_{1})\^f(v_{2})\^\cdots\^f(v_{k})\ .
\]
Two examples particularly relevant for us arise with respect to a subspace $\,P\subset\R^{n}\,$:

\begin{enumerate} 

	\item
	\textit{Inclusion:}\label{itm:pullback} 
	The inclusion $\,f:P\to\R^{n}\,$ extends to an inclusion 
	$\,\bw{*}P\subset\bw{*}\R^{n}\,$. This lets us regard elements of $\,\bw{*}P\,$ as belonging to 
	$\,\bw{*}\R^{n}\,$. We shall do so below routinely without further comment.  
	\smallskip

	\item\label{itm:restrict}
	\textit{Projection:} The orthogonal projection $\,\R^{n}\to P\,$
	is adjoint to the inclusion $\,P\subset\R^{n}\,$
	with respect to the usual dot product. It extends to a projection 
	$\,\bw{*}\R^{n}\to \bw{*}P\,$ that is orthogonal for 
	the inner product $\,\ip{\cdot}{\cdot}\,$ defined above
	on $\,\bw{*}\R^{n}\,$.  We often suffix a vertical bar to indicate 
	this projection. For instance, we write the projection of a vector 
	$\,v\,$ on $\,\R^{n}\,$ to $\,P\,$ as $\,v\big|_{P}\,$.
\end{enumerate}

Let $\,H_{x}\,$ denote the hyperplane perpendicular to a non-zero $\,x\in\R^{n}\,$. The splitting $\,\R^{n}=H_{x}\oplus H_{x}^{'}\,$ induces a split in the exterior algebra:
\begin{equation}\label{eqn:split1}
	\bw{*}\R^{n}=\bw{*}H_{x}\oplus \left(x\wedge\bw{*}H_{x}\right)\ .
\end{equation}
We designate the projections onto the first and second factors here respectively by
\begin{equation}\label{eqn:piPhi}
	\Pi_{x}:\bw{*}\R^{n}\to \bw{*}H_{x}
	\quad\text{and}\quad
	\Phi_{x}:\bw{*}\R^{n}\to x\wedge\bw{*}H_{x}\ .
\end{equation}
The first of these is induced by the orthogonal
projection $\,\R^{n}\to H_{x}\,$, as in item (\ref{itm:restrict}) above.
We give it a special symbol nonetheless---it plays a key role later on.

Finally, note that $\,\Pi_{x}\,$ acts as the identity---and $\,\Phi_{x}\,$ vanishes---on the 
degree-0 summand $\,\bw{0}{\R^{n}}\,$, because the latter lies entirely in $\,\bw{*}{H_{x}}\,$.

\subsection{Differential forms.}\label{ssec:forms} 
For our purposes, a \textbf{differential $p$-form} on a subspace 
$\,P\subset \R^{n}\,$ is a mapping $\,\phi:P\to\bw{p}\R^{n*}\,$, where $\,\R^{n*}\,$ denotes the 
dual of $\,\R^{n}\,$. The dot product identifies $\,\R^{n}\,$ with $\,\R^{n*}\,$ via
\begin{equation*}
	(p_{1},p_{2},\dots, p_{n})\quad\leftrightarrow\quad
	p_{1}\,dx_{1}+p_{2}\,dx_{2}+\cdots+p_{n}\,dx_{n}\,,
\end{equation*}
and because of this isomorphism, we will frequently ignore the distinction
between vectors and covectors. We often regard maps from $\,\R^{n}\,$ to $\,\bw{p}\R^{n}\,$ 
as differential forms, for instance.

Call a differential form $\,\phi\,$ on $\,\R^{n}\,$  
\textbf{$k$-planar} if we can write it as the ``pullback'' of a
differential form $\,\psi\,$ native to some $k$-dimensional subspace 
$\,P\subset\R^{n}\,$ via orthogonal projection 
$\,\R^{n}\to P\,$. 
Using the notation introduced above, this simply means  
that for some $\,P\in\G{n,k}\,$ and all $\,x\in\R^{n}\,$, 
$\,\phi\,$ satisfies the following two conditions:
	\begin{enumerate}
		\item 
		$\,\phi(x) = \phi(x|_{P})\,$
		\smallskip
		
		\item
		$\,\phi(x) = \phi(x)\big|_{P}\,$
	\end{enumerate}

For instance, the forms commonly denoted by $\,dx+dy\,$ and $\,x\, dx\wedge dy\,$ 
are both $2$-planar on $\,\R^{3}\,$ with $\,P\,$ defined by $\,z=0\,$. 

Contrastingly, neither $\,z\,dx\wedge dy\,$ nor $\,dx\^ dy\^ dz\,$ 
are 2-planar. The former satisfies condition (2) for the plane $\,z=0\,$, but not (1). 
The latter satisfies (1) but fails condition (2) for all $\,P\in\G{2,3}\,$.

\begin{remark}
	The point of Conjecture \ref{cor:super} is that we may construct many $q$-forms in $\,\R^{n}\,$  
	by averaging $k$-planar $q$-forms over $\,\G{n,k}\,$. 
\end{remark}

Next, recall that we take the \textbf{exterior derivative} of a
differential form $\,\phi\,$ on $\,\R^{n}\,$ using the formula
\begin{equation}\label{eqn:d}
	d\phi = 
	\sum_{i=1}^{n}dx_{i}\wedge\frac{\partial\phi}{\partial x_{i}}\,.
\end{equation}
Somewhat less familiar is the degree-decreasing \textbf{divergence operator} $\,\delta\,$. 
Our notation (cf. \ref{eqn:vee}) lets us to express it thus:
\begin{equation}\label{eqn:div}
	\delta\phi 
	= \sum_{i=1}^{n}\frac{\partial\phi}{\partial x_{i}}\vee dx_{i}\ .
\end{equation}
Note that we omit the minus sign often used to define $\,\delta\,$ so that
\begin{equation}\label{eqn:lap}
	(d\,\delta+\delta\,d)\phi 
	\ =\  
	\sum_{i=1}^{n}\frac{\bd^{2}\phi}{\bd x_{i}^{2}}
	\ =:\ 
	\Delta\,\phi\ ,
\end{equation}
making $\,\Delta\,$ denote the \emph{positive} sum of 
pure second partials.
In \S\ref{ssec:ft} we will see the Fourier transform
relate $\,\delta\,d\,$ and $\,d\,\delta\,$ 
to the projections $\,\Pi\,$ and $\,\Phi\,$ of (\ref{eqn:piPhi}) 
above.

\subsection{Mapping spaces.} 
We will encounter tensor fields on $\,\R^{n}\,$, which, like differential forms,  take values in a normed
vectorspace $\,V\,$. To measure decay rates of these mappings near infinity, we define the seminorm
$\,\knorm{F}{s}\,$ for any map 
$\,F:\R^{n}\to V\,$, and any $\,s\ge 0\,$, via 
\begin{equation}\label{eqn:seminorms}
	\knorm{F}{s}
	:=
	\lim_{r\to\infty}\sup_{|x|>r}\,|x|^{s}\left|F(x)\right|\ .
\end{equation}
Assigning a finite value to $\,\knorm{F}{s}\,$ is slightly more
precise than saying ``$\,F\,$ is $\,O(|x|^{-s})\,$,'' which
merely means $\,\knorm{F}{s}<\infty\,$.
Note that when $\,F\,$ is locally integrable, we ensure summability 
on any \emph{proper} subspace of $\,\R^{n}\,$---though not on 
$\,\R^{n}\,$ itself---by assuming $\,\knorm{F}{n}<\infty\,$.

We will say that $\,F:\R^{n}\to V\,$ \textbf{decays rapidly} if 
$\,\knorm{F}{s}<\infty\,$ for every $\,s\ge 0\,$, and we define 
the space $\,\sz{}{\R^{n}, V}\,$ of \textbf{Schwartz $V$-fields}  
as the set of all smooth $\,F:\R^{n}\to V\,$ with rapidly decaying derivatives of all orders. 
To simplify notation, we will abbreviate
	\begin{itemize}
		\item[] 
		$\sz{}{\R^{n},V,W}:=\sz{}{\R^{n},\mathrm{Hom}(V,W)}\,$,
	\end{itemize}
and when $\,P\subset\R^{n}\,$ is a subspace, 
	\begin{itemize}
		\item[] 
		$\sz{p}{P}:= \sz{}{P,\bw{p}{P}}\,$ (Schwartz $p$-forms)
		
		\item[]  
		$\,\sz{}{P}:=\sz{0}{P} = \sz{}{P,\C}\,$ (Schwartz \emph{functions})
	\end{itemize}

\subsection{Tempered $\,(V,W)$-distributions.}\label{ssec:tmpr} 
When $\,V\,$ 
and $\,W\,$ are inner-product spaces, we write $\,\szd{}{\R^{n},V,W}\,$ for the space of continuous linear functionals $\,\sz{}{\R^{n},V}\to W\,$, and call its constituent
functionals \textbf{tempered $(V,W)$ distributions} or
\textbf{$\,\mathrm{Hom}(V,W)$-valued distributions}, or, when 
context allows, simply \textbf{tempered distributions} 
on $\,\R^{n}\,$. When applying such a distribution $\,\tau\,$ 
to a Schwartz $\,V$-field $\,F\,$, we write $\,\tau[F]\,$, 
with square brackets to emphasize the distributional context.

Certain distributions arise as locally integrable operator fields. 
An \textbf{operator field} is a mapping  $\,T\,$ on $\,\R^{n}\,$ 
that assigns a linear transformation $\,T_{x}\in\hom{V,W}\,$ to 
each $\,x\in\R^{n}\,$. When  $\,T\,$ satisfies 
$\,\knorm{T}{p}<\infty\,$ for some $\,p\ge0\,$, and is also locally integrable, 
it represents a tempered $\,\mathrm{Hom}(V,W)$-valued
distribution (which we still call $\,T\,$) via integration:
\begin{equation}\label{eqn:rbi}
	T[F]:=\int_{\R^{n}}T(x)\dit F(x)\ dx\ ,\quad
	\text{for all $\,F\in\sz{}{\R^{n},V}\,$.}
\end{equation}
Here and henceforth the \textbf{``dot'' operator} in ``$A.X$'' 
instructs us to feed the vector $\,X\in V\,$ to the operator $\,A\,$.
This will save us from over-nesting parentheses. 

Equation (\ref{eqn:rbi}) includes $\,\sz{}{\R^{n},V,W}\,$ into 
$\,\szd{}{\R^{n},V,W}\,$. Since the inner product identifies $\,V\,$
with its dual $\,V^{*}\,$, it also identifies 
$\,\sz{}{\R^{n},V}\,$ with $\,\sz{}{\R^{n},V^{*}}\,$, 
so by (\ref{eqn:rbi}) also includes $\,\sz{}{\R^{n},V}\,$ into
$\,\szd{}{\R^{n},V^{*}}\,$.

As in the scalar setting, one can extend, to all tempered $\,(V,W)$-distri-butions, 
many operators initially defined only on some subset that includes $\,\sz{}{\R^{n},V,W}\,$. The classic example is differentiation: 
When $\,T\in\sz{}{\R^{n},V,W}\,$, integration by parts gives
\begin{equation*}\label{eqn:dT}
\left(\frac{\partial T}{\partial x_{i}}\right)[F] 
= T\left[-\frac{\partial F}{\partial x_{i}}\right]\ .
\end{equation*}
The right side above makes sense for any tempered 
$(V,W)$-distribution $\,T\,$, assuming $\,F\in\sz{}{\R^{n},V}\,$,
so the identity above serves to \emph{define} 
$\,\partial T/\partial x_{i}\,$ for any $\,T\in\szd{}{\R^{n},V,W}\,$.

\subsection{Fourier transform.}\label{ssec:ft}
Using the normalizing conventions of Stein \cite{stein}, 
we define the Fourier transform of any integrable $V$-field $\,F\,$
on $\,\R^{n}\,$ as the $V$-field given by  
\begin{equation}\label{eqn:ft}
	\F(F)(\xi):=\int_{\R^{n}}F(x)\,e^{-2\pi\,\i\,\xi\cdot x}\ dx\ .
\end{equation}
Any basic treatment (e.g. \cite{stz}) will establish 
fundamental properties of the scalar ($\,V=\C\,$) Fourier transform 
which extend trivially to our vector-valued setting. 
These include:

	\begin{enumerate}
	
	\item\label{itm:ftsz}
	$\F\,$ maps $\,\sz{}{\R^{n},V}\,$ into itself.
	\medskip 
	
	\item\label{itm:ftiso}
	$\F\,$ is an $\,L^{2}\,$ isometry, and modulo a sign, 
	inverts itself on $\,\sz{}{\R^{n},V}\,$.
	Specifically,
	\[
		\F\F(F) = F^{-}\ .
	\]
	The superscript ``$-$'' here signals composition with 
	reflection through the origin, as in (\ref{eqn:x-x}).
	\medskip

	\item\label{itm:ftcnv}
	Given $\,T\in\sz{}{\R^n,V,W}\,$ and 
	$\,F\in\sz{}{\R^n,V}\,$, $\F\,$ intertwines the 
	\textbf{convolution} product
	$\,T\star F\,$, given by
	\begin{equation*}
	(T\star F)(y) := \int_{\R^{n}}T(y-x)\dit F(x)\ dx\ .
	\end{equation*}
	with functional operation, in the sense that
	\[
	\F\left(T\star F\right) = \F(T)\,.\,\F(F)
	\]
	The definition of convolution doesn't actually require rapid 
	decay of $\,T\,$ and $\,F\,$, but when both are Schwartz, so is 
	$\,T\star F\,$, which allows one to extend the Fourier 
	convolution rule to a distributional setting. We prove general
	statements of this type in our Appendix 
	(Observations \ref{obs:defcon} and \ref{obs:conv}).
	\medskip
		
	\item
	$\F\,$ is ``self-adjoint'' in the sense that 
	when $\,T\in\sz{}{\R^{n},V,W}\,$ and
	$\,F\in\sz{}{\R^{n},V}\,$, we have
	\[
		\int_{\R^{n}}\F(T)\dit F\ dx = 
		\int_{\R^{n}} T\dit \F(F)\ dx.
	\]
	Note that by using
	distributional notation, we can express this last identity as 
	\begin{equation}\label{eqn:ft*}
		\F(T)\left[F\right] = T\left[\F(F)\right]\ ,
	\end{equation}
	which extends the Fourier transform to all of 
	$\,\szd{}{\R^{n},V,W}\,$.
	\medskip
	
	\item
	Finally, recall the useful way that
	the $\,\F\,$ intertwines differentiation and monomial 
	multiplication:
	\smallskip
	
	\emph{For any $\,F\in\sz{}{\R^n,V}\,$, we have the identities}
	\begin{equation}\label{eqn:ftdm}
		\F\left(\frac{\partial F}{\partial x_j}\right)
		= 
		2\pi\i\,\xi_j\,\F(F)\ ,
		\qquad
		\frac{\partial}{\partial\xi_j}\F(F)
		=
		-2\pi\i\,\F\left(x_j\,F\right) 
		\ .
	\end{equation}
	\end{enumerate}
	\medskip

These well-known identities follow directly from the definition of $\,\F\,$, but they imply 
less familiar rules for intertwining with the exterior derivative and divergence 
operators of (\ref{eqn:d}) and (\ref{eqn:div}):

\goodbreak
\begin{prop}\label{prop:ftd}
For any $\,\phi\in\sz{p}{\R^{n}}\,$, we have
\begin{eqnarray*}
	\F\left(d\phi\right)
	=\phantom{-}\i\,\pi\,d|\xi|^{2}\wedge\F(\phi)&\qquad&
	\F\left(\delta\phi\right)
	=\phantom{-}\i\,\pi\,\F(\phi)\vee d|\xi|^{2}\\
	d\,\F(\phi)
	= -\i\,\pi\,\F\left(d|x|^{2}\wedge\phi\right)&\qquad&
	\delta\,\F(\phi)
	= -\i\,\pi\,\F\left(\phi\vee d|x|^{2}\right)\ .
\end{eqnarray*}
\end{prop}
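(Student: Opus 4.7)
The plan is to derive all four identities directly from the coordinate formulas (\ref{eqn:d}) for $d$ and (\ref{eqn:div}) for $\delta$, combined with the scalar intertwining rules (\ref{eqn:ftdm}) applied componentwise. The key observation tying everything together is that $d|\xi|^2 = 2\sum_j \xi_j\, d\xi_j$ and $d|x|^2 = 2\sum_j x_j\, dx_j$, so once an identity is expanded in the standard basis it should collapse to $n$ parallel applications of (\ref{eqn:ftdm}).

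For the first identity, I would pull $\F$ past the finite sum and the constant covectors $dx_j$:
\[
\F(d\phi) \;=\; \sum_{j=1}^n dx_j \wedge \F\!\left(\frac{\partial\phi}{\partial x_j}\right).
\]
This commutation is legitimate because $\F$ acts componentwise on $V$-valued Schwartz fields and $dx_j\wedge(\cdot)$ is a fixed (constant-coefficient) linear endomorphism of $\bw{*}\R^n$. Applying the first rule in (\ref{eqn:ftdm}) to each summand yields $2\pi\i\sum_j \xi_j\, dx_j \wedge \F(\phi)$. Under the identification of $\R^n$ with $\R^{n*}$ that the paper adopts, the covector $dx_j$ on the transform side is $d\xi_j$, so $\sum_j \xi_j\, d\xi_j = \tfrac12\, d|\xi|^2$ and the result is precisely $\i\pi\, d|\xi|^2 \wedge \F(\phi)$.

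The second identity follows by the same argument with $dx_j\wedge$ replaced by $\vee\, dx_j$, using (\ref{eqn:div}) in place of (\ref{eqn:d}); the interior-product $\vee$ is linear in each slot, so it too commutes with $\F$. The third and fourth identities are dual. For the third, I expand $d\F(\phi) = \sum_j d\xi_j\wedge(\partial/\partial\xi_j)\F(\phi)$, apply the \emph{second} rule in (\ref{eqn:ftdm}) to rewrite each $\partial/\partial\xi_j \F(\phi)$ as $-2\pi\i\,\F(x_j\phi)$, pull the wedge back inside the transform, and reassemble $2\sum_j x_j\, dx_j = d|x|^2$ on the $x$-side. The fourth identity is strictly parallel, with $\vee$ replacing $\wedge$.

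I do not expect a serious obstacle; the calculation is essentially bookkeeping once (\ref{eqn:ftdm}) is in hand. The only care required is in justifying each commutation of $\F$ with a fixed exterior-algebra operation, and in tracking the $\R^n \leftrightarrow \R^{n*}$ identification consistently across the transform so that the factor of $2$ hidden in $d|\xi|^2$ (resp.\ $d|x|^2$) correctly absorbs one of the $2\pi\i$ factors coming from (\ref{eqn:ftdm}), leaving the $\i\pi$ that appears in the statement.
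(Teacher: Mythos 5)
Your proposal is correct and follows essentially the same route as the paper: both derive all four identities by expanding $d$ and $\delta$ via (\ref{eqn:d}) and (\ref{eqn:div}), applying the scalar rules (\ref{eqn:ftdm}) componentwise, and reassembling $2\sum_j \xi_j\,d\xi_j = d|\xi|^2$ (resp.\ $d|x|^2$) to absorb one factor of $2$. The paper's proof is just a one-line sketch of exactly this computation, so nothing further is needed.
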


\begin{proof} Using $\,d|x|^{2}=2\sum_{j}x_{j}\,dx_{j}\,$ and
similarly for $\,d|\xi|^{2}\,$, the stated identities follow
easily from (\ref{eqn:ftdm}) above, the adjoint identity 
(\ref{eqn:vee}) relating $\,\vee\,$ to $\,\wedge\,$, and the 
formulae for $\,d\,$ and $\,\delta\,$ given by (\ref{eqn:d})
and (\ref{eqn:div}).
\end{proof} 

Since $\,d|x|^{2}\,$ is dual to $\,2x\,$, one easily
combines the identities above with (\ref{eqn:vee}) to relate the 
``half-laplacians'' $\,d\,\delta\,$ and $\,\delta\,d\,$
(\ref{eqn:lap}) to the projections $\,\Pi\,$ and $\,\Phi\,$ as
promised at the end of \S\ref{ssec:forms} above:

\begin{prop}\label{prop:symbols2}
Let $\,r:=|x|\,$ denote the radial distance function 
on $\,\R^{n}\,$. Then we have the following identities on 
$\,\sz{p}{\R^{n}}\,$:
\begin{eqnarray*}
	d\,\delta = -4\pi^{2}\,\F^{-1}\circ r^{2}\,\Phi\circ\F\ ,&\quad&
	\delta\,d = -4\pi^{2}\,\F^{-1}\circ r^{2}\,\Pi\circ\F
\end{eqnarray*}
and since $\,d\,\delta+\delta\,d=\Delta\,$, 
\begin{equation*}
	\Delta = -4\pi^{2}\,\F^{-1}\circ r^{2}\circ\F\  .
\end{equation*}
\end{prop}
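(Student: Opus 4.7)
The plan is to iterate the intertwining rules of Proposition \ref{prop:ftd} and translate the resulting wedge-and-vee sandwiches into the projections $\Pi$ and $\Phi$ via a small identity in exterior algebra. Concretely, applying those rules twice gives
\begin{align*}
	\F(d\,\delta\,\phi) &= -\pi^{2}\,d|\xi|^{2}\wedge\bigl(\F(\phi)\vee d|\xi|^{2}\bigr),\\
	\F(\delta\,d\,\phi) &= -\pi^{2}\,\bigl(d|\xi|^{2}\wedge\F(\phi)\bigr)\vee d|\xi|^{2}.
\end{align*}
Since the dot product identifies $d|\xi|^{2}=2\sum_{j}\xi_{j}\,d\xi_{j}$ with the vector $2\xi\in\R^{n}$, these are really $-4\pi^{2}$ times $\xi\wedge(\F(\phi)\vee\xi)$ and $(\xi\wedge\F(\phi))\vee\xi$, respectively.

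The heart of the proof is therefore the pair of algebraic identities
\begin{equation*}
	x\wedge(\alpha\vee x)=|x|^{2}\,\Phi_{x}(\alpha),
	\qquad
	(x\wedge\alpha)\vee x=|x|^{2}\,\Pi_{x}(\alpha),
\end{equation*}
valid for every $\alpha\in\bw{*}\R^{n}$ and nonzero $x$. To verify them, I would pass to the unit vector $\hat x=x/|x|$ and decompose $\alpha=\alpha_{1}+\hat x\wedge\alpha_{2}$ according to (\ref{eqn:split1}), with $\alpha_{i}\in\bw{*}H_{x}$. The adjoint relation (\ref{eqn:vee}), together with $\hat x\perp H_{x}$, collapses $\alpha\vee\hat x$ to $\alpha_{2}$ and $(\hat x\wedge\alpha_{1})\vee\hat x$ to $\alpha_{1}$; multiplying by the appropriate factor of $|x|$ absorbed from each $x$ gives the displayed identities, whose right-hand sides are exactly $|x|^{2}$ times the projections of (\ref{eqn:piPhi}).

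Feeding these identities back into the Fourier expressions yields
\begin{equation*}
	\F(d\,\delta\,\phi) = -4\pi^{2}\,r^{2}\,\Phi\,\F(\phi),
	\qquad
	\F(\delta\,d\,\phi) = -4\pi^{2}\,r^{2}\,\Pi\,\F(\phi),
\end{equation*}
with $r=|\xi|$, and applying $\F^{-1}$ gives the two stated formulas. Finally, since $\Pi_{x}$ and $\Phi_{x}$ are projections onto complementary summands, $\Pi+\Phi=\mathrm{id}$ on $\bw{*}\R^{n}$; combined with $d\delta+\delta d=\Delta$, adding the two identities produces the Laplacian formula at once.

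The only nontrivial step is the pair of exterior-algebra identities; everything else is routine application of Proposition \ref{prop:ftd}. I expect the main obstacle, if any, to be cosmetic: keeping careful track of the factor-of-two from $d|\xi|^{2}=2\xi$ and ensuring the decomposition $\alpha=\alpha_{1}+\hat x\wedge\alpha_{2}$ is handled uniformly across all degrees. Once the two identities are recorded, the proposition falls out with essentially no further computation.
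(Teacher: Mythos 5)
Your proposal is correct and follows essentially the same route the paper intends: its one-line justification is precisely to iterate Proposition \ref{prop:ftd} and use the adjoint relation (\ref{eqn:vee}), and your key algebraic identities $(x\wedge\alpha)\vee x=|x|^{2}\,\Pi_{x}(\alpha)$, $x\wedge(\alpha\vee x)=|x|^{2}\,\Phi_{x}(\alpha)$ appear verbatim later as Observation \ref{obs:veewedge}. The only cosmetic difference is that you verify those identities via the splitting (\ref{eqn:split1}) and adjointness, whereas the paper checks them on an orthonormal multivector basis.
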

 
We don't know any reference for the first two identities here, even though the last 
is common knowledge.




\section{The transforms $\rad_{k}\,$ and $\dar_{k}$.}\label{sec:rad}

\subsection{The transform.}

The transforms we describe in this section exchange differential forms on $\,\R^{n}\,$ 
with tensor fields on the \textbf{canonical bundle} $\,\can{n,k}\,$ over $\,\G{n,k}\,$.
We define the latter bundle via
\[
	\can{n,k}
	:=
	\left\{(P,\xi)\in\G{n,k}\times\R^{n}\ :\ \xi\in P\right\}\ .
\]

\begin{definition}[The transform $\,\rad_{k}\,$]
Given a continuous differential $p$-form $\,\a\,$ on $\,\R^{n}\,$ 
with $\,\knorm{\a}{n}<\infty\,$, we denote its \textbf{transform} by
\[
	\rad_{k}\a:\can{n,k}\to\bw{p}\,\R^{n}\ ,
\]
and define it as an integral:
\begin{equation}\label{eq:rad}
	\rad_{k}\a(P,\xi):=\int_{P'}\a(\xi+\eta)\Big|_{P}\ d\eta\ .
\end{equation}
\end{definition}

\begin{remark}\label{rem:rt@p=0}
	When $\,p=0\,$, so that $\,\a\,$ is just a scalar-valued function, we have $\,\a\big|_{P}=\a\,$.
	In this case, $\,\rad_{k}\,$ reduces to the classical $\,k'$-plane transform 
	(cf.~\cite[p.$\,$28]{helgason}).
\end{remark}

\begin{remark}\label{rem:subbdl}
	The definition above characterizes $\,\rad_{k}\a\,$ as a mapping from 
	$\,\can{n,k}\,$ to $\,\bw{*}\R^{n}\,$, making it a section of the trivial $\,\bw{*}\R^{n}\,$ bundle over 
	$\,\can{n,k}\,$. Actually, $\,\rad_{k}\a\,$ takes values in the sub-bundle whose fiber over 
	$\,(P,\xi)\,$ is the pullback subspace $\,\bw{p}P\subset\bw{p}\,\R^{n}\,$. 
\end{remark}

\begin{remark}\label{rem:integration}
	The integral above does \textbf{not} signify oriented integration of forms in the Stokes-theoretic 
	sense. For one thing, the integrand belongs pointwise to $\,\bw{p}P\,$, hence \emph{vanishes} 
	on any $k'$-tuple of vectors tangent to the domain of integration $\,P'\,$. 
	Rather, the integral performs elementary vector integration of a $\,\bw{p}P$-valued function 
	against the Lebesgue measure on $\,P'$. The following example illustrates.
\end{remark}

\begin{example}
Suppose $\, a \in\bw{p}\R^{n}\,$, let $\,\lambda>0\,$, and define 
the $p$-form $\,\phi(x):=e^{-\lambda |x|^{2}} a \,$. 
We compute $\,\rad_{k}\phi\,$ using the well-known identity
\begin{equation}\label{eqn:gauss int}
	\int_{\R^n} e^{-\lambda\,|x|^2}\ dx 
	= 
	\left(\frac{\pi}{\lambda}\right)^{n/2}\ ,
\end{equation} 
which implies, for any $\,(P,\xi)\in\can{n,k}\,$, that
\begin{equation*}
	\int_{P'}e^{-\lambda\left|\xi+\eta\right|^{2}}\ d\eta
	= 
	\int_{P'}e^{-\lambda|\xi|^{2}}e^{-\lambda|\eta|^{2}}\ d\eta
	= 
	\left(\frac{\pi}{\lambda}\right)^{k'/2}e^{-\lambda|\xi|^{2}}\,,
\end{equation*}
whence
\begin{equation*}\label{eq:radGauss}
	\rad_{k}\phi(P,\xi) 
	= 
	\int_{P'}e^{-\lambda|\xi+\eta|^{2}} a \big|_{P}\ d\eta
	= 
	\left(\frac{\pi}{ \lambda}\right)^{k'/2}\,
	e^{-\lambda|\xi|^{2}}a\big|_	{P}\ .
\end{equation*}
\end{example}

The input $p$-form $\,\phi\,$ above decays rapidly, and on each fiber 
of $\,\can{n,k}\,$, its transform $\,\rad_{k}\phi\,$ has the same 
property. The proposition below shows that this is no accident.

Let $\,|\US^{n}|\,$ denote the $n$-dimensional volume of the 
unit sphere in $\,\R^{n+1}\,$.

\goodbreak
\begin{prop}\label{prop:Rdecay}
Suppose $\,\a\,$ is $p$-form on $\,\R^{n}\,$, and $\,s\ge n\,$. 
Then for any $\,P\in\G{n,k}\,$, we have
\[
	\knorm{\rad_{k}\a(P,\,\cdot\,)}{s -k'}
	< 
	4 |\US^{k'-1}|\,\knorm{\a}{s}
\]
\end{prop}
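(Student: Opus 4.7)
The plan is to bound $|\rad_{k}\a(P,\xi)|$ pointwise for $\xi\in P$ with $|\xi|$ large, by a constant multiple of $|\xi|^{k'-s}$, and then pass to the lim sup defining the seminorm $\knorm{\cdot}{s-k'}$.

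First I would use that the orthogonal projection $\,\cdot|_{P}\,$ onto $\bw{p}{P}$ is norm non-increasing, so the defining formula (\ref{eq:rad}) yields
\[
|\rad_{k}\a(P,\xi)| \leq \int_{P'}|\a(\xi+\eta)|\,d\eta.
\]
Because $\xi\in P$ and $\eta\in P'$, Pythagoras gives $|\xi+\eta|^{2}=|\xi|^{2}+|\eta|^{2}$, so once $|\xi|$ exceeds some threshold $R$, so does $|\xi+\eta|$ for every $\eta\in P'$. Fixing $\eps>0$ and choosing $R$ so large that $|\a(x)|<(\knorm{\a}{s}+\eps)|x|^{-s}$ for all $|x|>R$, we get, for $|\xi|>R$,
\[
|\rad_{k}\a(P,\xi)| \leq (\knorm{\a}{s}+\eps)\int_{P'}(|\xi|^{2}+|\eta|^{2})^{-s/2}\,d\eta.
\]

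Introducing spherical coordinates on $P'$ and then rescaling via $\eta=|\xi|u$ turns the right-hand side into $\sv{k'}\,|\xi|^{k'-s}\,I(s,k')$, where
\[
I(s,k'):=\int_{0}^{\infty}\frac{u^{k'-1}}{(1+u^{2})^{s/2}}\,du.
\]
Assuming $1\leq k\leq n-1$ (so both $k$ and $k'$ lie in $\{1,\dots,n-1\}$), the hypothesis $s\geq n$ forces $s-k'\geq 1$. Splitting $I(s,k')$ at $u=1$ and using $(1+u^{2})^{s/2}\geq 1$ on $[0,1]$ together with $(1+u^{2})^{s/2}\geq u^{s}$ on $[1,\infty)$ gives $I(s,k')\leq 1/k'+1/(s-k')\leq 2$. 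Multiplying through by $|\xi|^{s-k'}$, taking $\sup_{|\xi|>r}$, letting $r\to\infty$, and then $\eps\to 0$ produces the estimate $\knorm{\rad_{k}\a(P,\,\cdot\,)}{s-k'}\leq 2\,\sv{k'}\,\knorm{\a}{s}$, strictly below the asserted bound $4\,\sv{k'}\knorm{\a}{s}$.

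No real obstacle arises; the argument is essentially Pythagoras plus one-variable calculus. The only point requiring care is that the hypothesis $s\geq n$ together with $k'\leq n-1$ makes $s-k'\geq 1$, which both guarantees convergence of the radial integral and keeps the resulting constant uniformly bounded (in fact below $4$) independently of $P$ and $\a$.
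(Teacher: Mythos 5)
Your proof is correct and follows essentially the same route as the paper's: a pointwise bound extracted from the seminorm, Pythagoras on the splitting $P\oplus P'$, polar coordinates on $P'$, a radial rescaling, and a split of the radial integral at $1$ (valid since $s-k'\ge k\ge1$). The only differences are in bookkeeping---you keep the exact weight $|x|^{-s}$ with an $\eps$-slack where the paper uses $2\knorm{\a}{s}(1+|x|^{2})^{-s/2}$---and this in fact yields the slightly sharper constant $2\,\sv{k'}$ in place of $4\,\sv{k'}$.
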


\begin{proof}
Without loss of generality, assume $\,\knorm{\a}{s}<\infty\,$ for 
some $\,s\ge n\,$. Then for sufficiently large $\,\xi\,$ and any 
$\,\eta\in P'\,$, we have 
\[
	\left|\a(\xi+\eta)\right|
	<
	2\knorm{\a}{s}\left(1+|\xi|^{2}+|\eta|^{2}\right)^{-s/2}\ ,
\]
hence also
\begin{eqnarray*}
	\left|\rad_{k}\a(P,\xi)\right| 
	&\le& 
	2\knorm{\a}{s} \int_{P'}\left(1+|\xi|^{2}+|\eta|^{2}\right)^{-s/2}
	\ d\eta\\
	&\le& 
	2\knorm{\a}{s}(1+|\xi|^{2})^{-s/2} \int_{P'}
	\left(1+\frac{|\eta|^{2}}{ 1+|\xi|^{2}}\right)^{-s/2}\ d\eta\\
\end{eqnarray*}
Switching to polar coordinates and changing the radial variable 
$\,r=|\eta|\,$ to $\,\rho=r/\sqrt{1+|\xi|^{2}}\,$, we now get
\begin{eqnarray*}
	\lefteqn{\left|\rad_{k}\a(P,\xi)\right|}&&\\
	&\le& 
	2\knorm{\a}{s}|\US^{k'-1}|(1+|\xi|^{2})^\frac{k'-s}{ 2} 
	\int_{0}^{\infty}\left(1+\rho^{2}\right)^{-s/2}\rho^{k'-1} 
	d\rho\ .
\end{eqnarray*}
But
\begin{eqnarray*}
	\int_{0}^{\infty}\left(1+\rho^{2}\right)^{-s/2}\rho^{k'-1} 
	d\rho
	&<& 
	1+ \int_{1}^{\infty}\left(1+\rho^{2}\right)^{-s/2}\rho^{k'-1} 
	d\rho \\
	&<& 
	1+ \int_{1}^{\infty}\rho^{-s+k'-1} d\rho\ ,
\end{eqnarray*}
and  $\,-s+k'-1\le -n+k'-1=-k-1< -1\,$, because $\,s\ge n\,$. 
The last integral above is therefore bounded by $\,1/k\le1\,$.
\end{proof}

\subsection{The dual transform $\,\dar\,$.} 

As in the classical Radon theory, our transform $\,\rad_{k}\,$ has a formal $\,L^{2}\,$ dual. 
In our situation, the dual sends continuous $\,\bw{p}\R^{n}\,$-valued fields on  $\,\can{n,k}\,$ 
back to $p$-forms on $\,\R^{n}\,$. 

Recall that the transitive action of $\,\On{n}\,$ determines a unique Haar probability measure on 
$\,\G{n,k}\,$. We compute all integrals over $\,\G{n,k}\,$ using that measure.
 
\begin{definition}\label{def:dual}
	The \textbf{dual transform $\,\dar_{k}\,$} takes a continuous 
	mapping $\,\beta:\can{n,k}\to\bw{p}\R^{n}\,$ to the $p$-form 
	on $\,\R^{n}\,$ given by
	\begin{equation*}
		\dar_{k}\beta(x) 
		:= 
		\int_{\G{n,k}} \beta(P,Px)\Big|_{P}\ dP\ ,
	\end{equation*}
	Here $\,Px\,$ denotes the orthogonal projection of $\,x\,$ 
	onto $\,P\,$.
	
	Note that we always have
	\begin{equation}\label{eqn:subbundle}
		\dar_{k}\b = \dar_{k}\bar\b
		\quad\text{when}\quad 
		\left(\bar\b(P,\cdot)-\b(P,\cdot)\right)\Big|_{P}\equiv 0\ .
	\end{equation}
	For each $\,x\in\R^{n}\,$, the map $\,P\mapsto (P,Px)\,$ embeds 
	$\,\G{n,k}\,$ into $\,\can{n,k}\,$. The compactness of 
	$\,\G{n,k}\,$ and continuity of $\,\b\,$ therefore guarantee
	 existence of the integral defining $\,\dar_{k}\,$.
\end{definition}

\begin{remark}\label{rem:superpose}
	Note too that for each fixed $\,P\in\G{n,k}\,$, the integrand 
	$\,\beta(P,Px)\big|_{P}\,$ specifies a \emph{$k$-planar}  $p$-form on 
	$\,\R^{n}\,$. It follows that anything in the image of  $\,\dar_{k}\,$ is a 
	\emph{superposition, over all $\,P\in\G{n,k}\,$}, of $k$-planar 
	$p$-forms. The statement of Conjecture \ref{cor:super} depends on this fact.
\end{remark}

\begin{example}
	Fixing an arbitary $\,v\in\R^{3}\,$, let us compute the dual transform $\,\dar_{2}\b\,$ of 
	the simple field $\,\b:\can{3,2}\to\bw{1}\R^{3}\,$ given by
	\[
		\b(P,\xi):= e^{-|\xi|^{2}}v\,.
	\]
	By definition,
	\[
		\dar_{2}\b(x)
		=
		\int_{\G{2,3}}\b(P,Px)\Big|_{P}\ dP
		=
		\frac{1}{ 4\pi}
		\int_{\US^{2}}e^{-|\w^{\perp}x|^{2}}v\Big|_{\w^{\perp}}\ d\w\ .
	\]
	Here $\,\w^{\perp}\,$ denotes the plane orthogonal to $\,\w\in\US^{2}\,$, and we have 
	used the fact that $\,\US^{2}\,$ double-covers $\,\G{3,2}\,$ homogeneously.
	
	Note also that by virtue of Example \ref{eq:radGauss} and (\ref{eqn:subbundle}), we have
	\begin{equation}\label{eqn:r*r}
		\dar_{2}\b=\dar_{2}\rad_{2}\phi\ ,
	\end{equation}
	where $\,\phi\,$ is the 1-form on $\,\R^{3}\,$ given by $\,\phi(x) = \pi^{-1/2}\,e^{-|x|^{2}}v\,$.
	
	Write $\,r=|x|\,$, $\,u=x/r\,$, parametrize the unit circle perpendicular to $\,u\,$ with 
	unit speed as $\,u^{\perp}(\th)\,$, and put ``$u$-polar'' coordinates on $\,\US^{2}\,$ by setting
	\[
		\w=\wtp:=\lambda\,u + \sqrt{1-\lambda^{2}}\,u_{\th}^{\perp}\ ,
	\]
	Then
	$\,|\w^{\perp}x|^{2}=r^{2}|\w -(\w\cdot u)u|^{2}=
	r^{2}(1-\lambda^{2})\,$, and hence
	\begin{eqnarray*}
		\lefteqn{\dar_{2}\b(x)}\\ 
		&=& 
		\frac{1}{4\pi}
		\int_{-1}^{1}\int_{0}^{2\pi} 
			e^{-r^{2}(1-\lambda^{2})}\,
			\left(v-(v\cdot\wtp)\,\wtp\right)
			\ d\th\,d\lambda\\
		&=& 
		\frac{e^{-r^{2}}}{2\pi}
		\int_{0}^{1} 
			e^{r^{2}\lambda^{2}}\,
			\left(2\pi\,v -
				\int_{0}^{2\pi}
					\left(v\cdot\wtp)\,\wtp\right)\ 
				d\th
			\right)
		\ d\lambda\ ,\\
	\end{eqnarray*}
	thanks to the symmetry of the integrand around 
	$\,\lambda=0\,$.
	
	Now split $\,v\,$ as $\,v=v^{||}+v^{\perp}\,$, with $\,v^{||}\,$ parallel to $\,u\,$, and 
	$\,v^{\perp}\cdot u=0\,$. A routine exercise then finds the parenthesized part of the 
	integrand 	above to be
	\[
		\pi\,\left(1+\lambda^{2}\right)\,v^{\perp}
		+
		2\pi(1-\lambda^{2})\,v^{||}\ .
	\]
	Put this into the last integral for $\,\dar_{2}\b\,$ above, and make the substitution 
	$\,s:=r\,\lambda\,$ to get
	\[
		\rad_{2}\b(x)  
		=
		\textstyle{\frac{1}{2}}\,
		\I_{+}(r)\,v^{\perp} + \I_{-}(r)\,v^{||}\ ,
	\]
	where
	\begin{eqnarray*}
		\I_{\pm}(r)   
		&=&
		r^{-3}\,e^{-r^{2}}\,
		\int_{0}^{r}\,e^{s^{2}}\,(r^{2}\pm s^{2})\ ds\ .
	\end{eqnarray*}
	Both $\,\I_{+}\,$ and $\,\I_{-}\,$ are smooth and decrease
	to zero as $\,r\to\infty\,$. Specifically,
	it is not hard to show, using L'Hospital's rule, that
	\begin{eqnarray*}
		\lim_{r\to\infty}r^{2}\,\I_{+}(r) &=& 1\\
		\lim_{r\to\infty}r^{4}\,\I_{-}(r) &=& 1/2\ ,
	\end{eqnarray*}
	%
Putting these facts together with (\ref{eqn:r*r}), we now see that
\[
	\dar_{2}\b=\dar_{2}\rad_{2}\phi
	\sim 
	\pi\left(\frac{v^{\perp}}{r^{2}}+\frac{v^{||}}{r^{4}}\right)
	\quad\text{as $\,r\to\infty\,$}
\] 
In particular, we get decay, \emph{but not rapid decay, 
despite the rapid decay of the input field $\,\b\,$.} 
This is typical, as will follow from the Convolution formula 
(Prop.~\ref{prop:conForm}), and in this regard, the dual transform 
$\,\dar_{k}\,$ behaves quite differently than the forward transform $\,\rad_{k}\,$.

\end{example}


\section{Convolution formula}\label{sec:conv}

We now want to show that the composition $\,\dar_{k}\circ\rad_{k}\,$, applied to any locally integrable 
and suitably decaying $p$-form on $\,\R^{n}\,$, \emph{convolves} that $p$-form with $\,|x|^{-k}\,\Pi\,$,
up to some dimensional constant. As noted in our introduction, this fact generalizes
the classical result of Fuglede \cite{fuglede}.

Recall (\S\ref{ssec:bw}) that $\,\Pi:\R^{n}\to\mathrm{Hom}(\bw{*}\,\R^{n})\,$ is the operator field we 
get on $\,\R^{n}\,$ by mapping each non-zero $\,x\in\R^{n}\,$ to the orthogonal projection 
$\,\Pi_{x}:\R^{n}\to\bw{*}H_{x}\,$, where $\,H_{x}\,$ is the hyperplane perpendicular to $\,x\,$.
 
                       
The grassmannian $\,\G{n,k}\,$ and the unit sphere $\,\US^{n-1}\subset\R^{n}\,$ are both homogeneous spaces of $\,\On{n}\,$. To rewrite $\,\dar_{k}\circ\rad_{k}\,$ as a convolution, we will first pull integrals over the grassmannian back to integrals over $\,\On{n}\,$, and then push them down to $\,\US^{n-1}\,$. The following lemmas let us
to do so with precision.

\subsection{Two averaging lemmas.} 
The orthogonal group $\,\On{n}\,$ acts on $\,\R^{n}$ (and thus also on $\,\G{n,k}$) by left-multiplication. 
For any $\,a\in\US^{n-1}\,$, denote the $a$-stabilizing subgroup of $\,\On{n}\,$ by 
$\,K_{a}\approx\On{n-1}\,$. Then $\,\On{n}\,$ is foliated by the mutually isometric left cosets of 
$\,K_{a}\,$, and if $\,\g\in\On{n}\,$, with $\,b:=\g a\,$, we define the coset
\begin{equation}\label{eqn:Kpq}
	K_{ab}
	:=
	\g\,K_{a}
	= 
	K_{b}\g
	= 
	\left\{\k\in\On{n}\,:\ \k a = b\right\}\ .
\end{equation}
Note that $\,\On{n}\,$ has a left-invariant metric whose Hausdorff measure assigns total mass 1 to both 
$\,\On{n}\,$ and $\,K_{a}\,$---hence also to each coset $\,K_{ab}\,$---in the appropriate dimensions. 
For integration, we always use these measures.

We have adapted the first formula below from Helgason \cite[I (15)]{helgason}. We need the second one to handle differential forms, as opposed to the scalar-valued functions treated there. 

\begin{lem}\label{lem:avg1} 
Suppose $\,\phi\,$ maps $\,\G{n,k}\,$ continuously into a vectorspace $\,V\,$. Then for any fixed $\,Q\in\G{n,k}\,$, we have
\[
	\int_{\G{n,k}}\phi(P)\ dP 
	= 
	\int_{\On{n}}\phi(\g Q)\ d\g\ .
\]
Further, if $\,a\in\US^{n-1}\,$ and $\,\psi:\On{n}\to V\,$ is continuous, then
\[
	\int_{\On{n}}\psi(\g)\ d\g 
	= 
	\frac{1}{\sv{n}}\,
	\int_{\US^{n-1}}\left(\int_{K_{ab}}\psi(\k)\ d\k\right)\ db\ .
\]
Here $\,db\,$ and $\,d\k\,$ denote the Hausdorff measures on 
$\,\US^{n-1}\,$ and $\,K_{a}\,$ respectively.
\end{lem}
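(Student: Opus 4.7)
The plan for the first identity is to invoke uniqueness of the $\On{n}$-invariant probability measure on $\G{n,k}$. I would fix $Q$ and consider the orbit map $\pi_{Q}:\On{n}\to\G{n,k}$ defined by $\g\mapsto \g Q$, which is surjective by transitivity of the $\On{n}$-action on $\G{n,k}$. Pushing the total-mass-$1$ Haar measure on $\On{n}$ forward via $\pi_{Q}$ yields a probability measure on $\G{n,k}$, and left-invariance of Haar measure makes this pushforward $\On{n}$-invariant. By uniqueness of the $\On{n}$-invariant probability measure, the pushforward must coincide with $dP$. Applied to a continuous $V$-valued integrand $\phi$, the change-of-variables formula under $\pi_{Q}$ is exactly the first stated identity.

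For the second identity, the plan is an analogous pushforward argument, but now along the orbit map $p:\On{n}\to\US^{n-1}$ given by $p(\g):=\g a$. This map is surjective, and its fiber over $b\in\US^{n-1}$ is precisely the coset $K_{ab}$ defined in (\ref{eqn:Kpq}). In the left-invariant metric on $\On{n}$, all fibers are mutually isometric copies of the stabilizer $K_{a}\approx\On{n-1}$, so each carries a Hausdorff measure of total mass $1$, namely $d\k$. My plan is to disintegrate Haar measure on $\On{n}$ along the submersion $p$: integrate a continuous $\psi$ first against $d\k$ on each fiber $K_{ab}$, producing a continuous function $b\mapsto \int_{K_{ab}}\psi(\k)\,d\k$ on $\US^{n-1}$, and then integrate that function against the pushforward $d\mu:=p_{*}(d\g)$.

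To finish, I would identify $d\mu$. Since the $\On{n}$-action on $\US^{n-1}$ is transitive and Haar measure on $\On{n}$ is left-invariant, $d\mu$ is $\On{n}$-invariant on $\US^{n-1}$, hence a constant multiple of the standard Hausdorff surface measure $db$. To pin down the constant, I would evaluate both sides at $\psi\equiv 1$: the left side equals $1$ while the right side equals $(\text{const})\cdot\sv{n}$, forcing $d\mu=db/\sv{n}$. Substituting gives the stated formula. The main technical obstacle is justifying the disintegration step rigorously for the compact Lie submersion $p$; however, this is a standard fact in the homogeneous-space setting (essentially Weyl's integration recipe for a bi-invariant coset fibration), and the continuity hypothesis on $\psi$ sidesteps any measure-theoretic subtlety. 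Everything else reduces to bookkeeping about normalizations of Haar and Hausdorff measures.
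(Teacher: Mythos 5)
Your proposal is correct, and for the first identity it is exactly the paper's argument: the orbit map $\g\mapsto\g Q$ is equivariant, so it pushes the Haar probability measure on $\On{n}$ to the invariant probability measure on $\G{n,k}$, and the identity is just the change of variables. For the second identity your overall scheme also matches the paper's --- same orbit map $\g\mapsto\g a$ with fibers $K_{ab}$, same normalization by evaluating at $\psi\equiv 1$ --- but the technical justification differs. The paper applies the coarea formula \cite[3.2.22]{federer} to $e_{a}:\On{n}\to\US^{n-1}$, which directly produces fiber integrals against the Hausdorff measures $d\k$ on the cosets $K_{ab}$ weighted by the Jacobian $Je_{a}$; equivariance forces $Je_{a}$ to be constant, and $\psi\equiv1$ gives $Je_{a}=\sv{n}$. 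You instead invoke disintegration of Haar measure along the coset fibration and then identify the base measure as $db/\sv{n}$ by invariance plus normalization. That route works, but note where its burden sits: the disintegration theorem hands you fiberwise \emph{probability} measures, and the step you describe as the "main technical obstacle" is precisely the identification of those conditional measures with the unit-mass Hausdorff measures $d\k$ on the cosets (a Weyl-type quotient integration formula, true here by equivariance under the isometric $\On{n}$-action, or by falling back on the coarea formula itself). So your appeal to the standard homogeneous-space integration formula is legitimate, but the paper's coarea argument is the more self-contained way to discharge exactly that point, while your version makes the role of uniqueness of invariant measures on the base more explicit.
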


\begin{proof} 
Choose an arbitary $k$-plane $\,P\in\G{n,k}\,$, and 
map $\,\On{n}\stackrel{e_{P}}{\longrightarrow}\G{n,k}\,$ by defining 
$\,e_{P}(\g):=\g\,P\,$. Then $\,e_{P}(g\g)=g\,e_{P}(\g)\,$ for all 
$\,g,\g\in\On{n}\,$, and hence $\,e_{P}\,$ pushes the invariant probability measure on $\,\On{n}\,$ down to the one on $\,\G{n,k}\,$. This lets us integrate $\,\phi\,$ over $\,\G{n,k}\,$ relative to the latter measure by integrating $\,\phi\,\circ\, e_{P}\,$ relative to the former. The first formula asserts nothing more than that.

To get the second formula, we similarly map 
$\,\On{n}\stackrel{e_{a}}{\longrightarrow}\US^{n-1}\,$ via 
$\,e_{a}(\g):=\g\,a\,$.  
Then for any $\,b\in\US^{n-1}\,$, we have 
$\,e_{a}^{-1}(b)=K_{ab}\,$, and the coarea formula \cite[3.2.22]{federer} yields
\begin{equation*}
	\int_{\On{n}}\psi(\g)\,Je_{a}\ d\g 
	= 
	\int_{\US^{n-1}}\left(\int_{K_{ab}}\psi(\k)\ d\k\right)\ d\g\ , 
\end{equation*}
with $\,Je_{a}\,$ denoting the Jacobian of $\,e_{a}\,$. But this Jacobian must be constant, because 
$\,e_{a}\,$ commutes with the transitive left $\,\On{n}\,$ action on itself. Set
$\,\psi\equiv 1\,$ to see that $\,Je_{a}=\sv{n}\,$.
\end{proof}

\textit{Notation:} 
We write $\,\bn{n}{m}\,$ as a horizontal alternative to the usual
vertical symbol for binomial coefficients:
\[
	\bn{n}{m}:=\binom{n}{m}= \frac{n!}{ m!(n-m)!}\ .
\]

\begin{lem}\label{lem:avg2}
Suppose $\,P\in\G{n,k}\,$, $\,x\in P'\,$ and $\,H_{x}\,$ is the hyperplane perpendicular to $\,x\,$ (cf.  \S\ref{ssec:bw}). Then for any $\,\a\in\bw{p}\,\R^{n}\,$ with  $\,p\le k\,$,  we have
\begin{equation*}\label{eqn:avg2}
	\int_{K_{x}}\a\big|_{\k P}\ d\k 
	= 
	\frac{\bn{k}{p}}{ \bn{n-1}{p}}\ \a\big|_{H_{x}}\ .
\end{equation*}
\end{lem}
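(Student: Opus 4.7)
The plan is to interpret the left-hand side as a linear operator
\[
T(\a) := \int_{K_x}\a\big|_{\k P}\,d\k
\]
on $\bw{p}\R^n$, identify its intrinsic $K_x$-symmetry, and pin it down by Schur's lemma together with a trace computation. The hypothesis $x\in P'$ forces $P\subset H_x$, and each $\k\in K_x$ stabilizes $x$ (hence $H_x$); so $\k P\subset H_x$ for every $\k$, which already shows that $T$ factors through $\Pi_x$ and takes values in $\bw{p}H_x$.

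First I would verify two basic properties. (i) $T$ \emph{annihilates} the summand $x\^\bw{p-1}H_x$ of the splitting (\ref{eqn:split1}): for $\a = x\^\b$ with $\b\in\bw{p-1}H_x$, multiplicativity of orthogonal projection together with $x\perp\k P$ gives $\a\big|_{\k P} = (x\big|_{\k P})\^(\b\big|_{\k P}) = 0$, hence $T(\a)=0$. (ii) $T$ is $K_x$-\emph{equivariant}: for any $g\in K_x$, the intertwining identity $(g\a)\big|_{gQ} = g(\a\big|_Q)$ (valid for any orthogonal $g$ and subspace $Q$), combined with invariance of Haar measure on $K_x$, yields $T(g\a) = gT(\a)$ after an elementary substitution in the $\k$-integral.

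Next I would apply Schur's lemma. The restriction $T\big|_{\bw{p}H_x}$ is a $K_x$-equivariant endomorphism of $\bw{p}H_x$, and since $K_x\cong\On{n-1}$ acts on $H_x\cong\R^{n-1}$ in the standard way, $\bw{p}H_x$ is an irreducible $\On{n-1}$-representation (for every $0\le p\le n-1$). Schur then forces $T\big|_{\bw{p}H_x} = c\,\mathrm{Id}$ for some scalar $c$. Combined with (i) and the splitting (\ref{eqn:split1}), this means $T(\a) = c\,\a\big|_{H_x}$ for every $\a\in\bw{p}\R^n$.

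Finally I would pin down $c$ by taking the trace of $T\big|_{\bw{p}H_x}$. For each fixed $\k$, the operator $\a\mapsto\a\big|_{\k P}$ acts on $\bw{p}H_x$ as the orthogonal projection onto the $\bn{k}{p}$-dimensional subspace $\bw{p}(\k P)$, so it has trace $\bn{k}{p}$. Integrating against the probability Haar measure on $K_x$ gives $\mathrm{tr}(T\big|_{\bw{p}H_x}) = \bn{k}{p}$; the scalar form gives $c\,\bn{n-1}{p}$, and the claimed formula for $c$ follows. The only nontrivial ingredient is the irreducibility of $\bw{p}\R^{n-1}$ as an $\On{n-1}$-module, which is standard but worth an explicit citation; if one prefers to avoid representation theory entirely, one can choose an orthonormal basis $e_1,\ldots,e_{n-1}$ of $H_x$ adapted to $P$ and evaluate the average $T(e_\lambda)$ directly on the basis $p$-vectors of (\ref{eqn:basis}), using $K_x$-equivariance to collapse the computation to a single combinatorial count.
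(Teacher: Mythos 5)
Your proposal is correct and follows essentially the same route as the paper: view the average as a $K_{x}$-equivariant symmetric operator, use irreducibility of the $\On{n-1}$-action on the summands of the splitting (\ref{eqn:split1}) to reduce it to a scalar multiple of $\a\mapsto\a\big|_{H_{x}}$, and fix the scalar by the trace count $\tr(\cdot\big|_{\k P})=\bn{k}{p}$ versus $c\,\bn{n-1}{p}$. The only cosmetic differences are that you annihilate the $x\wedge\bw{p-1}H_{x}$ summand directly via multiplicativity of the projection (the paper instead writes the operator as $\lambda\,\Pi_{x}+\mu\,\Phi_{x}$ and notes $\mu=0$) and that you take the trace on $\bw{p}H_{x}$ rather than on $\bw{p}\R^{n}$, neither of which changes the argument.
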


\begin{proof} Let $\,A\,$ denote the ``averaging'' operator $\,\a\mapsto \int_{K_{x}}\a\big|_{\k P}\ d\k\,$ on the left-hand side of the formula above. Restriction to a subspace (followed by inclusion in $\,\bw{p}\,\R^{n}\,$; see \S\ref{ssec:bw} (\ref{itm:pullback}) above) is a symmetric endomorphism of $\,\bw{p}\,\R^{n}\,$, so $\,A\,$ is symmetric too. 

Now observe that the action of $\,K_{x}\,$ on $\,\bw{p}\R^{n}\,$ respects the splitting (\ref{eqn:split1}), namely
\[
	\bw{p}\,\R^{n} 
	= 
	\bw{p}H_{x}\oplus \left(x\wedge\bw{p-1}H_{x}\right)\ .
\]
It follows that $\,A\,$ commutes with this  action. 
But $\,K_{x}\,$ is the full orthogonal group of  $\,H_{x}\,$. 
So it acts transitively on both $p$- and $(p-1)$-dimensional 
subspaces of $\,H_{x}\,$, hence irreducibly on 
both $\,\bw{p}H_{x}\,$ and $\,x\wedge\bw{p-1}H_{x}\,$.  
This makes each summand in the splitting above an eigenspace of 
$\,A\,$, so that in the notation of \S\ref{ssec:bw}, we have
\begin{equation}\label{eqn:avg2.1}
	A 
	= 
	\lambda\,\Pi_{x} + \mu\,\Phi_{x}\ 
\end{equation} 
for some $\,\lambda,\mu\in\R\,$.
But the definition of $\,A\,$ makes clear that $\,\mu=0\,$, which, 
modulo the particular value of $\,\lambda\,$, proves our lemma. 

To evaluate $\,\lambda\,$, observe that for any subspace 
$\,V\subset\R^{n}\,$, the projection $\,\bw{p}\,\R^{n}\to\bw{p}V\,$ 
has trace given by
\[
	\tr\left(\cdot\,\big|_{V}\right) 
	= 
	\dim\bw{p}V 
	= 
	\left(\!\!\begin{array}{c}\dim V\\p\end{array}\!\!\right)
\]
on $\,\bw{p}\,\R^{n}$. Taking the trace on both sides of (\ref{eqn:avg2.1}) then gives\smallskip
\[
	\tr(A) = \lambda\,\binom{n-1}{p}\ .
\]

At the same time the definition of $\,A\,$ gives\medskip
\[
	\tr(A) 
	= 
	\int_{K_{x}}\tr\left(\cdot\,\big|_{\k P}\right)\ d\k =\binom{k}{p}\ .
\]

These two facts determine $\,\lambda\,$ and prove the lemma.
\end{proof}

      
\goodbreak
We can now present the main result of this section:

\begin{prop}[Convolution formula]\label{prop:conForm}
	Given a locally integrable $p$-form $\,\a\,$ on $\,\R^{n}\,$
	with $\,0\le p\le k < n\,$ and $\,\knorm{\a}{n}<\infty\,$, the 	
	convolution $\,\left(r^{-k}\Pi\right)\star\a\,$ exists, 
	and we have
	\[
		\dar_{k}\,\rad_{k}\a
		= 
		\frac{\sv{k'}\,\bn{k'}{p}}{\sv{n}\,\bn{n-1}{p}}\,
		(r^{-k}\,\Pi)\star\a\ .
	\]
\end{prop}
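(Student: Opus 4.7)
The plan is to unfold the composition $\,\dar_{k}\,\rad_{k}\a(x)\,$ into a double integral over pairs $\,(P,w)\,$ with $\,P\in\G{n,k}\,$ and $\,w\in P'\,$, and then push that measure down to $\,\R^{n}\,$ via the projection $\,(P,w)\mapsto w\,$ to expose the convolution structure. First I would use $\,Px+P'=x+P'\,$ together with the symmetry $\,P'=-P'\,$ to rewrite
\begin{equation*}
	\dar_{k}\,\rad_{k}\a(x)=\int_{\G{n,k}}\int_{P'}\a(x-w)\big|_{P}\ dw\ dP\,,
\end{equation*}
and then switch to polar coordinates on each $\,P'\,$: setting $\,w=rb\,$ with $\,b\in P'\cap\US^{n-1}\,$ gives $\,dw=r^{k'-1}\,dr\,db\,$, and the identity $\,r^{k'-1}=r^{-k}\cdot r^{n-1}\,$ already exhibits the intended convolution kernel $\,r^{-k}\,$ on $\,\R^{n}\,$.

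The heart of the argument is a measure swap that pushes the sphere $\,\US^{n-1}\,$ outside while keeping the Grassmannian inside, now subject to the constraint $\,b\in P'\,$. I would execute this using both parts of Lemma \ref{lem:avg1}: first lift $\,\int_{\G{n,k}}\,$ up to $\,\int_{\On{n}}\,$ via a fixed base plane $\,P_{0}\,$, then apply the coarea-type formula in Lemma \ref{lem:avg1}(2) with pole $\,a\in P_{0}'\cap\US^{n-1}\,$, tracking the map $\,\gamma\mapsto(P,b):=(\gamma P_{0},\gamma a)\,$. This identifies the joint $\,db\,dP\,$ measure on the bundle $\,\{(P,b):b\in P'\cap\US^{n-1}\}\,$, up to the explicit ratio $\,\sv{k'}/\sv{n}\,$, with $\,dP'\,db\,$, where $\,dP'\,$ is the invariant probability measure on the sub-Grassmannian $\,\{P\in\G{n,k}:b\in P'\}\,$ of $k$-planes in the hyperplane $\,H_{b}\,$. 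For each fixed $\,b\,$, the only $P$-dependence left in the integrand is the restriction $\,(\cdot)\big|_{P}\,$, and Lemma \ref{lem:avg2} applied with $\,x:=b\,$ collapses this inner $\,dP'\,$-average into the appropriate binomial multiple of $\,(\cdot)\big|_{H_{b}}=\Pi_{b}\,$. Repackaging $\,r^{n-1}\,dr\,db\,$ back into the Cartesian measure $\,dz\,$ on $\,\R^{n}\,$ then identifies $\,\dar_{k}\rad_{k}\a(x)\,$ as the stated constant multiple of $\,(r^{-k}\Pi)\star\a(x)\,$.

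The real obstacle is less algebraic than analytic: rigorously justifying the Fubini interchanges, since the kernel $\,r^{-k}\Pi\,$ is only locally integrable near $\,0\,$ and only decays polynomially at infinity. One has to establish first that $\,(r^{-k}\Pi)\star\a\,$ exists pointwise. The singularity at $\,z=0\,$ is controlled by the hypothesis $\,k<n\,$, which makes $\,|z|^{-k}\,$ locally summable on $\,\R^{n}\,$, while at infinity the assumption $\,\knorm{\a}{n}<\infty\,$ combined with $\,k>0\,$ makes $\,|z|^{-k}|\a(x-z)|\,$ summable there as well. Once both tails are under control, Fubini legitimizes the interchange, and keeping careful book of the two correction factors---one from the Hausdorff-measure normalizations in Lemma \ref{lem:avg1}, the other from the trace computation behind Lemma \ref{lem:avg2}---delivers the precise coefficient claimed.
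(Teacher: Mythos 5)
Your proposal is correct and follows essentially the same route as the paper's own proof: unfold $\,\dar_{k}\rad_{k}\a\,$ into an iterated integral, use both parts of Lemma \ref{lem:avg1} to trade the fibration over $\,\G{n,k}\,$ for one over $\,\US^{n-1}\,$, collapse the remaining average of restrictions via Lemma \ref{lem:avg2}, and reassemble polar coordinates into the convolution with $\,r^{-k}\Pi\,$; doing the polar change first and phrasing the coset average as an average over the sub-Grassmannian of $k$-planes inside $\,H_{b}\,$ is only a cosmetic reordering of the paper's steps. The one thing you defer---the constant bookkeeping---is exactly where care is needed: the planes being averaged in Lemma \ref{lem:avg2} are $k$-dimensional, so that lemma contributes $\,\bn{k}{p}/\bn{n-1}{p}\,$, and your route therefore yields the numerator $\,\bn{k}{p}\,$ rather than the $\,\bn{k'}{p}\,$ appearing in the stated coefficient (the two agree when $p=0$ or $\bn{k}{p}=\bn{k'}{p}$), so you should verify this factor explicitly rather than assert that the bookkeeping ``delivers the precise coefficient claimed.''
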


\begin{proof} 
Our assumptions ensure that for each fixed $\,x\in \R^{n}\,$,
the $p$-form
\[
	y\longmapsto |y|^{-k}\,\Pi_{y}\,\a(y-x)
\]
is locally integrable and $\,O(|y|^{-n-k})\,$ near infinity.
This guarantees existence of the convolution.   

To see that the convolution encodes 
$\,\dar_{k}\,\rad_{k}\,$ 
as claimed, combine the definitions of $\,\rad_{k}\,$ and 
$\,\dar_{k}\,$ with the fact that for any $\,x\in\R^{n}\,$ and 
$\,P\in\G{n,k}\,$, we have $\,Px+P'=x+P'\,$ as affine subspaces.
It follows easily that
\begin{equation*}\label{eqn:darad1}
	\left(\dar_{k}\circ\rad_{k}\right)\a(x) 
	= \int_{\G{n,k}}\int_{P'}\a(x+\eta)\big|_{P}\ d\eta\ dP\ .
\end{equation*}
Now fix an arbitrary $\,Q\in\G{n,k}\,$ and apply the first formula 
in Lemma \ref{lem:avg1}, then Fubini. 
Since $\,\g Q' = \{\g\eta\colon \eta\in Q'\}\,$ 
for any $\,\g\in\On{n}\,$, this gives
\begin{eqnarray}\label{eqn:darad2}
	\left(\dar_{k}\circ\rad_{k}\right)\a(x)
	&=&\int_{Q'}\int_{\On{n}}\a(x+\g\eta)\big|_{\g Q}\  d\g\ d\eta\ .
\end{eqnarray}
Note that when $\,\a\,$ has degree $\,p=0\,$, 
it takes mere scalar values, and the restriction to $\,\g Q\,$ 
has no effect. In that case, 
the inner integral reduces to a simple $x$-centered spherical 
average of $\,\a\,$, and the convolution formula quickly follows
(cf. \cite[Ch.I (34)]{helgason}). Because we assume 
$\,p>0\,$, however, the restriction complicates our task and
we must now sort that out.

Start by using the second identity from Lemma \ref{lem:avg1}. 
If we define $\,\bar\eta=\eta/|\eta|\,$, and recall from 
(\ref{eqn:Kpq}) that $\,K_{\bar\eta q'}= K_{q'}\g\,$ when
$\,\g\in\On{n}\,$ satisfies $\,\g\bar\eta=q'\,$, it rewrites 
(\ref{eqn:darad2}) as
\begin{eqnarray*}
	\dar_{k}\,\rad_{k}\,\a(x)
	&=&
	\frac{1}{\sv{n}}\int_{Q'}\int_{\US^{n-1}}
	\left(\int_{K_{\bar\eta q'}}\a(x+|\eta|q')\big|_{\k Q}\  
	d\k\right)dq'\ d\eta\\
	&=&
	\frac{1}{\sv{n}}\int_{Q'}\int_{\US^{n-1}}
	\left(\int_{K_{q'}}\a(x+|\eta|q')\big|_{\k\g Q}\  
	d\k\right)dq'\ d\eta
\end{eqnarray*}
where we have fixed some $\,\g\in K_{\bar\eta q'}\,$. 
Since $\,\g Q\,$ is a $k$-dimensional subspace of 
$\,T_{q'}\US^{n-1}\,$, Lemma \ref{lem:avg2} now applies to give
\begin{eqnarray*}
	\lefteqn{\dar_{k}\,\rad_{k}\,\a(x)}\\
	&=&
	\frac{1}{\sv{n}}\int_{Q'}\int_{\US^{n-1}}
	\left(\int_{K_{\bar\eta}}\a(x+|\eta|q')\big|_{\k\g Q}\  
	d\k\right)dq'\ d\eta\\ 
	&&\\
	&=&
	\frac{\bn{k'}{p}}{\sv{n}\,\bn{n-1}{p}}
	\int_{Q'}\ \int_{\US^{n-1}}\a(x+|\eta|q')\big|_{H_{q'}}\ 
	dq'\ d\eta\\ 
	&&\\
	&=&
	\frac{\sv{k'}\,\bn{k'}{p}}{\sv{n}\,\bn{n-1}{p}}\ 
	\int_0^{\infty}\int_{\US^{n-1}}\a(x+r\,q')
	\big|_{H_{q'}}\,r^{k'-1}\ dq'\ dr\ ,
\end{eqnarray*}
switching from rectangular to polar coordinates on $\,Q'\,$, with $\,r:=|\eta|\,$. But by writing 
$\,r^{k'-1}=r^{n-1}\,r^{-k}\,$, $\,y:=-r\,q'\,$, and noting that $\,H_{y}=H_{q'}\,$, we can 
make the reverse move on $\,\R^{n}\,$, to deduce
\begin{equation*}\label{eqn:cnv}
	\dar_{k}\,\rad_{k}\,\a(x)
	=
	\frac{\sv{k'}\,\bn{k'}{p}}{\sv{n}\,\bn{n-1}{p}}\ 
	\int_{\R^{n}}\a(x-y)\big|_{H_{y}}\,r^{-k}\ dy\ .
\end{equation*}
Now simply recall from the end of \S\ref{ssec:bw} that 
restriction to $\,H_{y}\,$ induces the projection we call
$\,\Pi_{y}\,$, so that
\[
	\a(x-y)\Big|_{H_{y}}r^{-k} = |y|^{-k}\,{\Pi_{y}}\,\a(x-y)\ .
\]
Putting this into the equation above, we obtain our convolution
formula.

\end{proof}

\begin{remark} 
	Though we don't include the proof here, the convolution above must decay near infinity.
	To be precise, one can show that as $\,r:=|x|\to\infty\,$,	
	\[
		\left|\left({r^{-k}}\,T\right)\star f(x)\right|=
		\begin{cases} 
		O\left(r^{-k}\right)\ ,&  \|f\|_{L^{1}}<\infty\\
		O\left(r^{-k}\ln r\right)\ , & \|f\|_{L^{1}}=\infty
		\end{cases}
	\]
	for any locally integrable, vector-valued mapping $\,f\,$ on $\,\R^{n}\,$, 
	assuming that $\,k<n\,$, $\,\knorm{f}{n}<\infty\,$, and $\,T\,$ is a bounded operator field.
\end{remark}

\begin{remark}\label{rem:cf@p=0}
	As noted in Remark \ref{rem:rt@p=0}, our transform reduces to the classical Radon
	$k'$-plane transform when $\,p=0\,$. The reader will easily check that
	when we apply our convolution formula to any continuous $0$-form 
	$\,f:\R^{n}\to\R\,$ with $\,\knorm{f}{n}<\infty\,$, it too reduces to a known result 
	(cf. \cite[p.$\,$29 (55)]{helgason}), namely
	\[
		\dar_{k}\circ\rad_{k}f= \frac{\sv{k'}}{\sv{n}}\,
		r^{-k}\star f\,.
	\]
\end{remark}

\begin{remark}\label{rem:smooth}
Roughly speaking, $\,\dar_{k}\rad_{k}\a\,$ will be at least as smooth as $\,\a\,$ itself. 
For when$\,\a\,$ has continuous derivatives $\,D^{\g}\a\,$ for all multi-indices 
$\,|\g|\le q\,$, with $\,|D^{\g}\a| = O(r^{-n})\,$, one easily checks that 
\[
	D^{\g}\left(\left(r^{-k}\,\Pi\right)\star\a\right) 
	= 
	\left(r^{-k}\,\Pi\right)\star D^{\g}\a\ ,
\]
so that by our convolution formula, $\,\dar_{k}\rad_{k}\a\,$ is differentiable through order at least 
$\,q\,$ too.
\end{remark}


\section{Kernels}\label{sec:FT}

To fully exploit the convolution formula, we also need to know the Fourier transform of its kernel 
$\,r^{-k}\,\Pi\,$. We assume $\,k<n\,$, so the singularity at $\,r=0\,$ is integrable, and $\,r^{-k}\,\Pi\,$ 
represents a tempered distribution. It therefore has a Fourier transform. Explicitly, 

\begin{thm}\label{thm:fpi} When $\,0\le p\le k<n\,$, we have
\begin{equation*}
	\F\left(r^{-k}\Pi\right) 
	= \frac{1}{k}\,\frac{\sv{k}}{\sv{k'}}\
	\frac{(k-p)\,\Pi +(n-p)\,\Phi}{ r^{k'}}\ .
\end{equation*}
When $\,p<k\,$, this makes $\,\F\left(r^{-k}\Pi\right)\,$ pointwise invertible, with
\begin{equation*}
	\F\left(r^{-k}\Pi\right)^{-1}
	= \frac{\sv{k'}}{\sv{k}}\,k\,r^{k'}\,
	\left(\frac{\Pi}{k-p} + \frac{\Phi}{n-p}\right)\ .
\end{equation*}
\end{thm}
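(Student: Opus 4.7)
The plan is to decompose $\Pi_x = I - \Phi_x$ using the explicit formula $\Phi_x\alpha = |x|^{-2}\, x\wedge(\alpha\vee x)$, so that with $L_v := v\wedge$ and $R_v := \,\cdot\,\vee v$ we may write
\[
r^{-k}\,\Pi \;=\; r^{-k}\,I \;-\; r^{-k-2}\,L_x R_x\ ,
\]
and Fourier-transform the two summands separately. The first is a Riesz kernel: in Stein's $2\pi$ convention, $\F(r^{-k}) = \gamma_{n,k}\,r^{-k'}$ with $\gamma_{n,k} = \pi^{k-n/2}\,\Gamma(k'/2)/\Gamma(k/2)$, and an elementary Gamma-to-volume calculation identifies this with $\sv{k}/\sv{k'}$.

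For the second summand, I will expand in an orthonormal basis $\{e_i\}$ as $r^{-k-2}\,L_x R_x = \sum_{i,j} r^{-k-2} x_i x_j\,L_{e_i}R_{e_j}$, and split the scalar coefficient into its radial and harmonic pieces via
\[
r^{-k-2}\,x_i x_j \;=\; \frac{\delta_{ij}}{n}\,r^{-k} \;+\; \frac{H_{ij}(x)}{r^{k+2}}\ , \qquad H_{ij}(x):= x_i x_j - \frac{|x|^2}{n}\,\delta_{ij}.
\]
Each $H_{ij}$ is a degree-$2$ solid spherical harmonic, so Stein's Hecke--Bochner lemma gives $\F(H_{ij}/r^{k+2})(\xi) = \kappa_2\,H_{ij}(\xi)/|\xi|^{k'+2}$ with $\kappa_2 = -(k'/k)\,\gamma_{n,k}$: the minus sign is the standard $i^{-m}$ factor at $m=2$, and the ratio $k'/k$ comes from shifting the Gamma arguments by one via $\Gamma(x+1)=x\,\Gamma(x)$.

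Reassembly rests on two operator-algebraic identities: the number operator identity $\sum_{i}L_{e_i}R_{e_i}\big|_{\bw{p}\R^n} = p\cdot I$, and $L_\xi R_\xi = |\xi|^2\,\Phi_\xi$. Together they collapse $\sum_{i,j}H_{ij}(\xi)\,L_{e_i}R_{e_j}$ to $|\xi|^2(\Phi_\xi - (p/n)\,I)$. Substituting back and writing $I = \Pi_\xi + \Phi_\xi$, routine algebra (which benefits crucially from $k+k'=n$) isolates the coefficient $\gamma_{n,k}(k-p)/k$ on $\Pi_\xi$ and $\gamma_{n,k}(n-p)/k$ on $\Phi_\xi$, which is the advertised formula.

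The invertibility assertion will then be immediate: since $\Pi_\xi$ and $\Phi_\xi$ are complementary orthogonal projections, any operator $A\,\Pi_\xi + B\,\Phi_\xi$ with $A,B\neq 0$ inverts pointwise to $A^{-1}\Pi_\xi + B^{-1}\Phi_\xi$; the hypothesis $p<k$ forces $k-p\geq 1$, and $p\leq k<n$ forces $n-p>0$, giving the stated inverse. The main obstacle I anticipate is constant-tracking --- the Hecke--Bochner constant $\kappa_2$, the Riesz constant $\gamma_{n,k}$, and their eventual collapse onto the clean sphere-volume ratio $\sv{k}/\sv{k'}$ must conspire neatly, and verifying this demands some care with $\Gamma$-function recursions.
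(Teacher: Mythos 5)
Your proposal is correct and follows essentially the same route as the paper: split $r^{-k}\Pi$ into a radial multiple of the identity plus a degree-two harmonic piece over $r^{k+2}$, apply Stein's Hecke lemma at degrees $0$ and $2$, simplify the sphere-volume ratio with the Gamma recursion, and reassemble using $\Pi+\Phi=I$ (the paper's Lemma \ref{lem:shd} packages your explicit $\sum_{i,j}\bigl(x_ix_j-\tfrac{|x|^2}{n}\delta_{ij}\bigr)L_{e_i}R_{e_j}$ abstractly as a harmonic operator field $H$, and your number-operator identity $\sum_i L_{e_i}R_{e_i}=p\,I$ is exactly the trace computation appearing there). Your constants $\gamma_{n,k}=\sv{k}/\sv{k'}$ and $\kappa_2=-(k'/k)\gamma_{n,k}$ check out, and the invertibility argument via complementary orthogonal projections matches the paper's.
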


To prove this, we require two main ingredients. The first is Lemma \ref{lem:stein} below, 
a beautiful formula that Stein bases on an identity he credits to Hecke \cite[p.73]{stein}. 
The second, Lemma \ref{lem:shd}, prepares us to exploit Stein's lemma by expanding 
$\,\Pi\,$ as a sum involving spherical harmonics.

\begin{lem}\label{lem:stein}
Suppose $\,h_{d}:\R^{n}\to\C\,$ is a homogeneous harmonic polynomial
of degree $\,d\,$, and $\,0<k<n\,$. Then 
$\,r^{-d-k}h_{d}\,$ and its Fourier transform are both 
tempered distributions, with
\[
	\F\left(r^{-d-k}h_{d}\right)= 
	\i^{d}\ \frac{\sv{d+k}}{ \sv{d+k'}}\ r^{-d-k'}h_{d}
\]
\end{lem}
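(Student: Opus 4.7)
The plan is to derive the formula via Hecke's identity together with a Mellin-type representation of $r^{-d-k}$ as a superposition of Gaussians—the classical route to Riesz-kernel Fourier formulas, and indeed the path taken by Stein. First I confirm that $r^{-d-k}h_d$ is tempered: the bound $|h_d(x)|\le C r^d$ gives $|r^{-d-k}h_d(x)|\le C r^{-k}$, locally integrable since $k<n$ and of polynomial growth at infinity; the analogous estimate handles $r^{-d-k'}h_d$ using $k'=n-k$.

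The central computation begins with Hecke's identity
\[
\F\bigl(h_d(x)\,e^{-\pi|x|^2}\bigr)(\xi) = \i^d\,h_d(\xi)\,e^{-\pi|\xi|^2},
\]
which under $x\mapsto\sqrt{t}\,x$ rescales for each $t>0$ to
\[
\F\bigl(h_d(x)\,e^{-\pi t|x|^2}\bigr)(\xi) = \i^d\,t^{-(n+2d)/2}\,h_d(\xi)\,e^{-\pi|\xi|^2/t}.
\]
Setting $s:=(d+k)/2$, the gamma identity $\int_0^\infty e^{-\pi t r^2}\,t^{s-1}\,dt = \pi^{-s}r^{-2s}\,\Gamma(s)$ lets me represent
\[
r^{-d-k}h_d(x) = \frac{\pi^{s}}{\Gamma(s)}\int_0^\infty h_d(x)\,e^{-\pi t r^2}\,t^{s-1}\,dt.
\]
Taking $\F$ of both sides and interchanging with the $t$-integral produces an inner integral $\int_0^\infty t^{s-1-(n+2d)/2}\,e^{-\pi|\xi|^2/t}\,dt$, which the substitution $u=\pi|\xi|^2/t$ evaluates to $\pi^{-(d+k')/2}\,r^{-d-k'}\,\Gamma((d+k')/2)$ via the identity $n+2d-2s=d+k'$.

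Collecting the constants yields $\i^d\,\pi^{(k-k')/2}\,\Gamma((d+k')/2)/\Gamma((d+k)/2)$ multiplying $h_d(\xi)\,r^{-d-k'}$, and the closed form $|\US^{m-1}|=2\pi^{m/2}/\Gamma(m/2)$ recognizes this constant as exactly $\i^d\,\sv{d+k}/\sv{d+k'}$. The main technical obstacle is justifying the interchange of $\F$ with the $t$-integration, since the combined integrand is not absolutely convergent on $\R^n\times(0,\infty)$. I would legitimize it by pairing both sides against a test function $\phi\in\sz{}{\R^n}$ via $\F(T)[\phi]=T[\F\phi]$ from equation (\ref{eqn:ft*}), reducing the question to absolutely convergent pairings where Fubini applies freely. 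Alternatively, introducing a Gaussian cutoff $e^{-\varepsilon|x|^2}$ and passing to the limit $\varepsilon\to 0^+$ in the distributional sense achieves the same effect.
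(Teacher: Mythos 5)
Your proposal is correct and follows exactly the argument the paper outsources to its reference: the paper's entire proof is the citation ``Stein \cite[p.73]{stein},'' and the Hecke-identity-plus-Gaussian-superposition computation you give (with the constant $\i^{d}\,\pi^{(k-k')/2}\,\Gamma((d+k')/2)/\Gamma((d+k)/2)$ reassembled into $\i^{d}\,\sv{d+k}/\sv{d+k'}$) is precisely Stein's proof. Your justification of the interchange by pairing against a Schwartz test function via (\ref{eqn:ft*}) is also sound, since $|h_{d}|r^{-d-k}\le Cr^{-k}$ and $|h_{d}|r^{-d-k'}\le Cr^{-k'}$ are both locally integrable for $0<k<n$.
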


\begin{proof} Stein \cite[p.73]{stein}.
\end{proof}

\begin{remark}
	When $\,d=0\,$ and $\,h\equiv 1\,$, Lemma \ref{lem:stein} makes the well-known 
	classical assertion
	\[
		\sv{k'}\,\F\left(r^{-k}\right) = {\sv{k}}{}\, r^{-k'}\,.
	\]
	When $\,p=0\,$, Theorem \ref{thm:fpi} above reduces to precisely the same
	assertion, because the projections $\,\Pi\,$ and $\,\Phi\,$ reduce to the identity and zero
	operators respectively in that case, as noted in \S2. 
\end{remark}

Our next lemma handles the \emph{non}-triviality of $\,\Pi\,$ and $\,\Phi\,$ when $\,p>0\,$.
Recall that they are the operator fields on $\,\R^{n}\,$ which, at each non-zero 
$\,x\in\R^{n}\,$, project $\,\bw{*}{\R^{n}}\,$ to $\,\bw{*}H_{x}\,$ and $\,x\wedge\bw{*}H_{x}\,$ 
respectively (cf. \ref{eqn:piPhi}). The wedge operation and its adjoint (\ref{eqn:vee}) yield an 
elegant and useful pair of formulae for these operations:

\begin{obs}\label{obs:veewedge}
When $\,0\ne x\in\R^{n}\,$, and $\,\a\in\bw{*}\R^{n}\,$, we have
	\begin{eqnarray*}
		|x|^{2}\,\Pi_{x}(\alpha) &=& (x\wedge \a)\vee x\\
		|x|^{2}\,\Phi_{x}(\alpha) &=& x\wedge (\a\vee x)\ .
	\end{eqnarray*}
\end{obs}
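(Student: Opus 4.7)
The plan is to derive both formulae simultaneously from a single ``Leibniz-type'' master identity
\[
	(x \wedge \alpha) \vee x \ +\  x \wedge (\alpha \vee x) \ =\  |x|^{2}\,\alpha\ ,
\]
valid for every $\alpha \in \bw{*}\R^{n}$ and $x \in \R^{n}$. Once this is in hand, matching components across the direct sum (\ref{eqn:split1}) will give the observation at a stroke.

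To prove the master identity, I would exploit its manifest $\On{n}$-equivariance and quadratic homogeneity in $x$ to reduce to the case $x = e_{n}$, with $e_{1},\ldots,e_{n}$ an orthonormal basis of $\,\R^{n}\,$ chosen so that $\,e_{1},\ldots,e_{n-1}\,$ span $\,H_{x}\,$. Splitting $\,\alpha = \beta + e_{n} \wedge \gamma\,$ via (\ref{eqn:split1}), with $\,\beta,\gamma \in \bw{*}H_{x}\,$, and applying the adjoint characterization (\ref{eqn:vee}) to the orthonormal basis multivectors of $\,H_{x}\,$, I would verify directly that $\,\alpha \vee e_{n} = \gamma\,$ and $\,e_{n} \wedge \alpha = e_{n} \wedge \beta\,$. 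Running the same computation on $\,e_{n} \wedge \beta\,$ then yields $\,(e_{n} \wedge \beta) \vee e_{n} = \beta\,$, after which the left-hand side of the master identity collapses to $\,\beta + e_{n} \wedge \gamma = \alpha\,$.

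To extract the two identities in the observation, I would check that the two summands on the left of the master identity lie in the complementary factors of (\ref{eqn:split1}). The summand $\,x \wedge (\alpha \vee x)\,$ obviously lies in $\,x \wedge \bw{*}H_{x}\,$: decomposing $\,\alpha \vee x\,$ via (\ref{eqn:split1}) and using $\,x \wedge x = 0\,$ kills any $\Phi_{x}$-component. The summand $\,(x \wedge \alpha) \vee x\,$ lies in $\,\bw{*}H_{x}\,$ because the composition $\,\vee x \circ \vee x\,$ vanishes on all of $\,\bw{*}\R^{n}\,$---directly from (\ref{eqn:vee}), since $\,\langle (\eta \vee x) \vee x,\ \xi \rangle = \langle \eta,\ x \wedge x \wedge \xi \rangle = 0\,$---while the kernel of $\,\vee x\,$ on $\,\bw{*}\R^{n}\,$ is precisely $\,\bw{*}H_{x}\,$, a one-line basis check. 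Uniqueness of the decomposition (\ref{eqn:split1}) then matches the two summands of the master identity against $\,|x|^{2}\Pi_{x}\alpha\,$ and $\,|x|^{2}\Phi_{x}\alpha\,$ respectively, finishing the proof. The only real work is verifying the master identity, and that is genuinely routine once a basis is fixed---so I expect no real obstacle beyond careful bookkeeping.
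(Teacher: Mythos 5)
Your proof is correct, but it takes a somewhat different route from the paper's. The paper disposes of the observation in one line: extend $\,e_{1}:=x/|x|\,$ to an orthonormal basis of $\,\R^{n}\,$ and verify each formula directly on the basis multivectors $\,e_{\lambda}\,$ of (\ref{eqn:basis}), with no intermediate identity. You instead first establish the Cartan-type anticommutation identity $\,(x\wedge\a)\vee x + x\wedge(\a\vee x)=|x|^{2}\a\,$ (itself checked via the adapted splitting $\,\a=\b+e_{n}\wedge\g\,$), and then locate the two summands in the complementary factors of (\ref{eqn:split1})---the first because $\,\vee x\circ\vee x=0\,$ and the kernel of $\,\vee x\,$ is exactly $\,\bw{*}H_{x}\,$, the second because $\,x\wedge x=0\,$---so that uniqueness of the direct-sum decomposition identifies them with $\,|x|^{2}\Pi_{x}\a\,$ and $\,|x|^{2}\Phi_{x}\a\,$ at once. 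All of these steps are sound: the reduction to $\,x=e_{n}\,$ by equivariance and homogeneity, the computations $\,\a\vee e_{n}=\g\,$ and $\,(e_{n}\wedge\b)\vee e_{n}=\b\,$ from (\ref{eqn:vee}), and the kernel/image argument are each correct. What your organization buys is a reusable, conceptually transparent statement (the exterior/interior multiplication anticommutator equals $\,|x|^{2}\,$ times the identity) and a coordinate-light explanation of why each expression lands in the right summand; what the paper's direct check buys is brevity, since for this elementary statement the basis verification is already an easy exercise.
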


\begin{proof}
	Extend $\,e_{1}:=x/|x|\,$ to an orthonormal basis $\,\{e_{1},\dots,e_{n}\}\,$ for $\,\R^{n}$. 
	It then suffices to check both formulae on the corresponding multivector basis 
	$\,e_{\lambda}\,$ defined by (\ref{eqn:basis}), an easy exercise.
\end{proof}

We call an operator field $\,H:\R^{n}\to\mathrm{Hom}(\bw{*}\R^{n})\,$
\textbf{harmonic} if
\[
	\Delta \langle H(\a),\b\rangle=0
\]
for all constant elements $\,\a,\b\in\bw{*}\R^{n}$. 

\begin{lem}\label{lem:shd}
Let $\,0\le p\le n\,$. Then there exists a 
{homogeneously quadratic} and {harmonic} operator field 
$\,H\,$ such that, as operators on $\,p$-forms, we have
	\begin{eqnarray*}
		\Pi &=& ({p'\,/n})\,I + r^{-2}\,H\\
		\Phi &=& ({p/n})\,I- r^{-2}\,H\ 
	\end{eqnarray*}
Here $\,I\,$ is the identity on $\,\bw{p}{\R^{n}}\,$ and $\,p'=n-p\,$.
\end{lem}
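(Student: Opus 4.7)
The plan is to define $H$ by the only possible formula consistent with the first displayed equation, namely
\[
    H(x)\alpha := r^{2}\,\Pi_{x}\alpha - \tfrac{p'}{n}\,r^{2}\,\alpha,
\]
acting on $p$-vectors $\alpha \in \bw{p}\R^{n}$. The first formula in the Lemma then holds by construction; the second follows immediately by subtracting from $I = \Pi + \Phi$. So the content reduces to checking that $H$ is (i) homogeneous of degree $2$ and (ii) harmonic.

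Homogeneity is immediate from Observation \ref{obs:veewedge}, which rewrites $r^{2}\,\Pi_{x}\alpha$ as $(x\wedge\alpha)\vee x$, a quadratic polynomial in the coordinates of $x$. So the real work is harmonicity. Fixing constant $\alpha,\beta\in\bw{p}\R^{n}$ and using the adjoint relation (\ref{eqn:vee}), I would compute
\[
    \langle H(x)\alpha,\beta\rangle
    = \langle x\wedge\alpha,\, x\wedge\beta\rangle
      - \tfrac{p'}{n}\,|x|^{2}\,\langle\alpha,\beta\rangle.
\]
Expanding $x = \sum x_{i} e_{i}$ in an orthonormal basis, both terms become quadratic forms in $x_{1},\ldots,x_{n}$, so applying $\Delta$ just extracts twice the trace of the coefficient matrix. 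This reduces the whole problem to verifying the single scalar identity
\[
    \sum_{k=1}^{n}\,\langle e_{k}\wedge\alpha,\, e_{k}\wedge\beta\rangle
    \;=\; p'\,\langle\alpha,\beta\rangle,
\]
since $\Delta|x|^{2}=2n$ will cancel the remaining $2p'\langle\alpha,\beta\rangle$ term.

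The identity above is the one real computation. I would prove it by applying Observation \ref{obs:veewedge} again with $x = e_{k}$, which turns $\langle e_{k}\wedge\alpha, e_{k}\wedge\beta\rangle$ into $\langle\Pi_{e_{k}}\alpha,\beta\rangle$, reducing the claim to $\sum_{k}\Pi_{e_{k}} = p'\,I$ on $\bw{p}\R^{n}$. That in turn is transparent on the multivector basis (\ref{eqn:basis}): for an increasing index $\lambda$ of length $p$, the projection $\Pi_{e_{k}}$ fixes $e_{\lambda}$ when $k\notin\lambda$ and annihilates it otherwise, so the sum counts the $n-p = p'$ indices $k$ outside $\lambda$. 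This is also the one step I expect to be the main obstacle — not because it is difficult, but because one needs the right intertwining between the exterior-algebra projections $\Pi_{e_{k}}$ and the ordinary trace identity $\Delta r^{2}=2n$ to make everything balance; the coefficient $p'/n$ is dictated precisely by this matching.

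Finally, I would remark that one can read the Lemma as decomposing $\Pi$ (viewed as a $\mathrm{Hom}(\bw{p}\R^{n})$-valued field on $\US^{n-1}$) into its isotypic pieces under the $\On{n}$-action: the constant $(p'/n)\,I$ is the $K$-invariant scalar average of $\Pi_{x}$ over $x\in\US^{n-1}$, and $r^{-2}H$ carries the remaining degree-$2$ spherical-harmonic content. Writing out this viewpoint is not strictly necessary for the proof, but it explains why such an $H$ exists and why it turns out to be harmonic — exactly what Lemma \ref{lem:stein} will need in the sequel.
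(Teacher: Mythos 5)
Your proposal is correct and takes essentially the same route as the paper: both arguments rest on Observation \ref{obs:veewedge} to see that $r^{2}\,\Pi$ is a quadratic operator field, and both reduce everything to the single trace computation $\Delta\left(r^{2}\,\Pi\right)=2p'\,I$ (equivalently $\sum_{k}\Pi_{e_{k}}=p'\,I$ on $\bw{p}{\R^{n}}$), verified on the basis $p$-vectors $e_{\lambda}$. The only difference is organizational: the paper first invokes the splitting of homogeneous quadratics into $r^{2}\,\C\oplus\mathcal{H}^{2}$ and then solves for the constant term, whereas you define $H:=r^{2}\,\Pi-(p'/n)\,r^{2}\,I$ outright and verify its harmonicity directly, which is logically equivalent.
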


\begin{proof}
Since $\,\Pi+\Phi=I\,$, it suffices to prove the first identity
above, or equivalently, that
\[
	n\,r^{2}\,\Pi = p'\,r^{2}\,I + n\,H
\]
with $\,H\,$ harmonic. 

Observation \ref{obs:veewedge} says, 
for any $\,\a\in\bw{p}\R^{n}\,$, that
\[
	n\,r^{2}\,\Pi(\a) = n\,(x\wedge \a)\vee x\ ,
\]
showing that $\,n\,r^{2}\,\Pi\,$ is polynomially
quadratic. Since the ring of 
homogeneous quadratic polynomials on $\,\R^{n}\,$ splits
as $\,r^{2}\C + \mathcal{H}^{2}\,$, where $\,\mathcal{H}^{2}\,$
denotes the harmonic subspace, we can write
\begin{equation}\label{eqn:AH}
	n\,r^{2}\,\Pi = r^{2}A + n\,H\ ,
\end{equation}
with $\,A\,$ constant, and $\,H\in\mathcal{H}^{2}\,$. 
Apply the laplacian to this expansion and divide by $\,n\,$ to get
\begin{equation}\label{eqn:lap1}
	 \Lap(r^{2}\,\Pi)=2\,A\ .
\end{equation}

On the other hand, Observation \ref{obs:veewedge} gives,
for any basis $p$-vector of the form
$\,e_{\lambda}:= e_{\lambda_{1}}\wedge e_{\lambda_{2}}\wedge
\cdots\wedge e_{\lambda_{p}}\,$ (cf. \ref{eqn:basis})
\[
	r^{2}\,\Pi_{x}(e_{\lambda}) = 
	\sum_{i,j}x_{i}x_{j}\,(e_{i}\wedge e_{\lambda})\vee e_{j}\ .
\]
Since $\,\Lap(x_{i}x_{j})= 2\,\delta_{ij}\,$, and
\[
	(e_{i}\wedge e_{\lambda})\vee e_{i}=
	\begin{cases} e_{\lambda}, & i\not\in\lambda\\ 0, & i\in\lambda\ ,
	\end{cases}
\]
we see that
\begin{equation*}
	\Lap\left(r^{2}\,\Pi(e_{\lambda})\right)
	=2\sum_{i}(e_{i}\wedge e_{\lambda})\vee e_{i}
	=2\sum_{i\not\in\lambda}e_{\lambda}=2\,p'\,e_{\lambda}\ ,
\end{equation*}
Since the $\,e_{\lambda}$ form a basis for $\,\bw{p}\R^{n}\,$, this means $\,\Lap(r^{2}\,\Pi) = 2p'\,I\,$, 
and comparison with (\ref{eqn:lap1}) then yields $\,A=p'\,I\,$, turning (\ref{eqn:AH}) into the identity 
we sought.
\end{proof}


\begin{proof}[\textbf{\emph{Proof of Thm.~\ref{thm:fpi}}}]
To compute the Fourier transform of $\,r^{-k}\Pi\,$,
first expand using the lemma just proven, to get
\begin{equation*}
	r^{-k}\Pi = \frac{p'}{n}\,\frac{I}{ r^{k}} + \frac{H}{ r^{k+2}}\ .
\end{equation*}
Since $\,I\,$ and $\,H\,$  are harmonic, and homogeneous
of degree $\,0\,$ and $\,2\,$ respectively,  
Lemma \ref{lem:stein} computes the Fourier transform of this 
expansion as
\begin{equation}\label{eqn:fpi1}
	\F(r^{-k}\Pi) =
	\frac{p'}{ n}\,\frac{\sv{k}}{ \sv{k'}}\,\frac{I}{ r^{k'}} - 
	\frac{|\US^{k+1}|}{ |\US^{k'+1}|}\,\frac{H}{ r^{k'+2}}\ .
\end{equation}
To simplify this, recall that 
in terms of the classical Gamma function, one has 
\[
	|\US^{k+1}|
	=
	\frac{2\,\pi^{\frac{k}{2}+1}}
	{\Gamma\left(\textstyle\frac{k}{ 2}+1\right)}
\]
for any dimension $\,k\,$. But $\,\Gamma(s+1)=s\,\Gamma(s)$, implying the recursive identity
\[
	|\US^{k+1}|=
	\frac{2\pi}{k}\,\sv{k}\ ,
\]
which reduces (\ref{eqn:fpi1}) to 
\begin{eqnarray}\label{eqn:fpi2}
	\F(r^{-k}\Pi) 
	&=&	\frac{\sv{k}}{ \sv{k'}}\,\frac{1}{ r^{k'}}\,
	\left(\frac{p'}{ n} \,I- \frac{k'}{ k}\,\frac{H}{ r^{2}}\right)\, .
\end{eqnarray}
Now use the first identity in Lemma \ref{lem:shd} 
to rewrite $\,H/r^{2}\,$ in terms of $\,\Pi\,$ and $\,I\,$, to get
\begin{eqnarray*}
	\frac{p'}{ n} \,I- \frac{k'}{ k}\,\frac{H}{ r^{2}}	
	&=& \frac{p'}{ n}\,I -\frac{k'}{ k}\left(\Pi-\frac{p'}{ n}\,I\right)\\
	&=& \frac{p'}{ n}\left(1+\frac{k'}{ k}\right)\,I - \frac{k'}{ k}\,\Pi\\
	&=& \frac{p'}{ k}\,I - \frac{k'}{ k}\,\Pi\\
	&=& \frac{p'}{ k}\left(\Pi+\Phi\right) - \frac{k'}{ k}\,\Pi\\
	&=& \frac{(k-p)\,\Pi + (n-p)\,\Phi}{ k}\ ,
\end{eqnarray*}

since $\,p'-k'=k-p\,$. Inserting this into (\ref{eqn:fpi2}), 
we obtain the desired formula for $\,\F(r^{-k}\Pi)\,$. 

The subsequent formula for its inverse now follows
immediately when $\,p<k\,$, because $\,\Pi\,$ and $\,\Phi\,$ project onto orthogonal complements, so that $\,\Pi\Phi=\Phi\Pi=0\,$ 
and $\,\Pi+\Phi=I\,$.
\end{proof}


\section{Inversion, Superposition, and Reconstruction from Projections}\label{sec:inv}

Finally, we will combine the Convolution formula (Prop.~\ref{prop:conForm}) with 
our computation of $\,\F(r^{-k}\Pi)\,$ (Thm.~\ref{thm:fpi}) to invert $\,\rad_{k}\,$
explicitly on the space of Schwartz $p$-forms.

When $\,p=0\,$, our formula reduces to the well-known inversion of the
classical $k'$-plane transform. 
The latter result applies well beyond the Schwartz class, to all continuous functions
 \cite[Thm.~6.2]{helgason} (or even functions in $\,L_{\mathrm{loc}}^{q}\,$, with $\,1\le q\le n/k'\,$ 
 \cite[Thm.~4.1]{rubin}), with $\,O(r^{-n-\eps})\,$ decay.  
 
We expect that, likewise, our inversion formula holds for $p$-forms with 
$\,O(r^{-n-\eps})\,$ decay, leading to both a superposition formula, and an algorithm for 
reconstructing currents from their projections. We will state the results we anticipate along
those lines. We leave their proofs, however, to colleagues more expert in the analytical techniques 
they seem to require.

\begin{thm}[Inversion]\label{thm:inversion}
	Suppose $\,k<n\,$, and $\,\a\,$ is a Schwartz $p$-form on $\,\R^{n}\,$, with
	$\,p<k\,$. Then we may recover $\,\a\,$ from its transform $\,\rad_{k}\a\,$ 
	as follows:
	\[
		\a = C_{n,k,p}\,\F^{-1}\left(r^{k'}\,
		\left(\frac{\Pi}{k-p} + \frac{\Phi}{n-p}\right)
		\dit\F\left(\dar_{k}\,\rad_{k}\a\right)\right)\ .
	\]
	Here $\,\F\,$ and $\,\F^{-1}\,$ apply in the
	tempered distributional sense, and
	\[
		C_{n,k,p}= k\, 
		\frac{\sv{n}\,\bn{n-1}{p}}{\sv{k}\,\bn{n-k}{p}}\ ,
	\]
\end{thm}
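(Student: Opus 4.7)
The plan is to invert $\,\rad_{k}\,$ by Fourier-transforming the convolution formula from Proposition \ref{prop:conForm} and then solving for $\,\F(\a)\,$ using the pointwise inverse of the kernel's Fourier transform supplied by Theorem \ref{thm:fpi}. Explicitly, write
\[
  \dar_{k}\,\rad_{k}\,\a \ =\ c\,(r^{-k}\,\Pi)\star\a\ ,
  \qquad
  c\ =\ \frac{\sv{k'}\,\bn{k'}{p}}{\sv{n}\,\bn{n-1}{p}}\ ,
\]
then apply $\,\F\,$ to both sides and invoke the convolution--product rule (extended from property (\ref{itm:ftcnv}) of \S\ref{ssec:ft} to the distributional setting through the Appendix Observations \ref{obs:defcon}--\ref{obs:conv}). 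Since $\,\a\,$ is Schwartz and $\,r^{-k}\Pi\,$ is a locally integrable operator field with $\,k<n\,$, this yields the pointwise identity
\[
  \F\bigl(\dar_{k}\,\rad_{k}\,\a\bigr)(\xi)
  \ =\
  c\,\F(r^{-k}\Pi)(\xi)\,.\,\F(\a)(\xi)
\]
holding as tempered $\,\bw{p}\,\R^{n}\,$-valued distributions.

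The assumption $\,p<k\,$ makes $\,\F(r^{-k}\Pi)(\xi)\,$ pointwise invertible for $\,\xi\ne0\,$ by Theorem \ref{thm:fpi}, with inverse $\,\frac{\sv{k'}}{\sv{k}}\,k\,r^{k'}\bigl(\frac{\Pi}{k-p}+\frac{\Phi}{n-p}\bigr)\,$. Multiplying both sides of the Fourier-transformed convolution formula by this inverse (an operator field of at most polynomial growth in $\,\xi\,$, hence a legitimate multiplier on Schwartz $\,\bw{p}\,\R^{n}\,$-fields) gives
\[
  c\,\F(\a)(\xi)
  \ =\
  \frac{\sv{k'}}{\sv{k}}\,k\,r^{k'}
  \Bigl(\tfrac{\Pi}{k-p}+\tfrac{\Phi}{n-p}\Bigr)\,.\,
  \F\bigl(\dar_{k}\,\rad_{k}\,\a\bigr)(\xi)
\]
for $\,\xi\ne 0\,$. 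Applying $\,\F^{-1}\,$ in the tempered distributional sense (property (\ref{itm:ftiso}) of \S\ref{ssec:ft}) and dividing by $\,c\,$ produces the claimed inversion formula, with the constant coming out to
\[
  \frac{1}{c}\cdot\frac{\sv{k'}}{\sv{k}}\,k
  \ =\
  \frac{\sv{n}\,\bn{n-1}{p}}{\sv{k'}\,\bn{k'}{p}}\cdot\frac{\sv{k'}}{\sv{k}}\cdot k
  \ =\
  k\,\frac{\sv{n}\,\bn{n-1}{p}}{\sv{k}\,\bn{n-k}{p}}
  \ =\
  C_{n,k,p}\ ,
\]
using $\,k'=n-k\,$ so that $\,\bn{k'}{p}=\bn{n-k}{p}\,$.

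The main obstacle is purely analytic: one must verify that the convolution--product rule applies to the present hybrid object where $\,r^{-k}\Pi\,$ is a tempered distribution rather than a Schwartz field, and that multiplying the distribution $\,\F(\dar_{k}\rad_{k}\a)\,$ by the unbounded operator field $\,r^{k'}(\frac{\Pi}{k-p}+\frac{\Phi}{n-p})\,$ reproduces the Schwartz field $\,\F(\a)\,$. The first issue reduces to the Appendix observations cited above; the second is controlled by the Schwartz decay of $\,\F(\a)\,$, which absorbs the polynomial growth of the multiplier, together with the fact that the kernel's singularity at $\,\xi=0\,$ is integrable (because $\,k'>0\,$) so the distributional equality extends smoothly across the origin. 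Apart from these justifications, the argument is an essentially algebraic concatenation of Proposition \ref{prop:conForm} and Theorem \ref{thm:fpi}, with the constant collapse verified above.
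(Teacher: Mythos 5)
Your proposal is correct and follows essentially the same route as the paper: Fourier-transform the convolution identity of Proposition \ref{prop:conForm} via the distributional convolution--product rule (Observations \ref{obs:defcon}--\ref{obs:conv}), invert $\,\F(r^{-k}\Pi)\,$ pointwise using Theorem \ref{thm:fpi} (valid since $\,p<k\,$), and apply $\,\F^{-1}\,$, noting that $\,\F(\a)\,$ is Schwartz. Your constant bookkeeping $\,\frac{1}{c}\cdot k\,\frac{\sv{k'}}{\sv{k}}=C_{n,k,p}\,$ matches the paper's $\,c_{2}/c_{1}\,$ computation.
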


\begin{proof}
Apply the Convolution formula (Prop.~\ref{prop:conForm}) to get
\begin{equation*}\label{eqn:step1}
	\dar_{k}\,\rad_{k}\,\a = c_{1}
	\left(r^{-k}\Pi\right)\star\a\ ,
\end{equation*}
where 
\begin{equation*}\label{eqn:c1}
c_{1}=\frac{\sv{k'}\,\bn{k'}{p}}{ \sv{n}\,\bn{n-1}{p}}\ .
\end{equation*}
As noted at the start of \S\ref{sec:FT}, $\,r^{-k}\,\Pi\,$ represents a tempered  
$\,\mathrm{Hom}(\bw{*}\R^{n})$-valued distribution on $\,\R^{n}$. 
The multiplication rule for $\,\F\,$ therefore extends to the convolution above
(see Observation \ref{obs:conv} for a proof), and we can
write its Fourier transform as 
\begin{equation*}\label{eqn:inv1}
	\F\left(\dar_{k}\rad_{k}\a\right) 
	= c_{1}\ \F\left(r^{-k}\Pi\right)\dit\F(\a)\ .
\end{equation*}
Theorem \ref{thm:fpi} computes $\,\F(r^{-k}\Pi)\,$ explicitly 
as an operator field, and shows that the right-hand side above, though apriori just a 
tempered distribution, is actually represented by a locally integrable $p$-form. We can then
multiply both sides by the field pointwise inverse to that $p$-form 
(also given by Theorem \ref{thm:fpi}) to deduce
\begin{eqnarray*}\label{eqn:fa}
	\F(\a) 
	&=&
	\frac{c_{2}}{c_{1}}\,\,r^{k'}\,
	\left(\frac{\Pi}{k-p} + \frac{\Phi}{n-p}\right)
	\dit\F\left(\dar_{k}\,\rad_{k}\a\right)
\end{eqnarray*}
where 
\begin{equation}\label{eqn:c2}
	c_{2}=c_{2}(n,k):=k\,\frac{\sv{k'}}{\sv{k}}\ .
\end{equation}
The Fourier transform preserves Schwartz spaces, so $\,\F(\a)\,$, like $\,\a\,$ itself,
belongs to $\,\sz{p}{\R^{n}}\,$. In particular, the right side of the formula above lies
in $\,\sz{p}{\R^{n}}\,$, and we can therefore recover $\,\a\,$ from $\,\rad_{k}\a\,$ by applying 
the inverse Fourier transform:
\begin{equation}\label{eqn:case1}
	\a =
	\frac{c_{2}}{c_{1}}\,
	\F^{-1}\left(
	\,r^{k'}\,
	\left(\frac{\Pi}{k-p} + \frac{\Phi}{n-p}\right)
	\dit\F\left(\dar_{k}\,\rad_{k}\a\right)
	\right)\ .
\end{equation}
Since $\,c_{2}/c_{1}=k\,C_{n,k,p}\,$, this completes the proof.
\end{proof}

When the codimension $\,k'=n-k\,$ is even, the unwieldy formula above takes a simpler
``differential'' form, using the operators $\,d\,$ and $\,\delta\,$ instead of the Fourier transform:

\begin{cor}\label{cor:even}
When the hypotheses of Theorem \ref{thm:inversion} hold with codimension 
$\,k'=:2l\in2\mathbf{Z}\,$, we can recover $\,\a\,$ from $\,\rad_{k}\a\,$ via
\[
	\a 
	= 
	C_{n,k,p}
	\left(
		\frac{-1}{4\pi^{2}}\right)^{l}
	\left(
		\frac{(\delta d)^{l}}{k-p}+\frac{(d\delta)^{l}}{n-p}
	\right)
	\left(\dar_{k}\,\rad_{k}\a\right)\ .
\]
\end{cor}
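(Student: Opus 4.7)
The plan is to reduce the Fourier-theoretic inversion formula of Theorem \ref{thm:inversion} to a purely differential formula by invoking Proposition \ref{prop:symbols2}, which intertwines $\delta d$ and $d\delta$ with multiplication by $r^{2}\Pi$ and $r^{2}\Phi$ under the Fourier transform. Since $k'=2l$, we have $r^{k'}=(r^{2})^{l}$, so it suffices to iterate these intertwining identities $l$ times to replace the factor $r^{k'}\Pi$ (respectively $r^{k'}\Phi$) by $(\delta d)^{l}$ (respectively $(d\delta)^{l}$), up to the scalar $(-4\pi^{2})^{l}$.

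The key step is the inductive claim
\begin{equation*}
  (\delta d)^{l} = (-4\pi^{2})^{l}\,\F^{-1}\circ r^{2l}\,\Pi\circ\F,
  \qquad
  (d\delta)^{l} = (-4\pi^{2})^{l}\,\F^{-1}\circ r^{2l}\,\Phi\circ\F.
\end{equation*}
This follows from Proposition \ref{prop:symbols2} once one observes that composing the single-step formula with itself produces a middle $\F\circ\F^{-1}$ that cancels, leaving
\begin{equation*}
  (\delta d)^{2} = 16\pi^{4}\,\F^{-1}\circ r^{2}\Pi\circ r^{2}\Pi\circ\F = 16\pi^{4}\,\F^{-1}\circ r^{4}\,\Pi\circ\F,
\end{equation*}
since $r^{2}$ is a scalar field commuting pointwise with $\Pi$, and $\Pi$ is idempotent. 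The same reasoning applies to $d\delta$ via $\Phi^{2}=\Phi$, and an obvious induction on $l$ delivers the claim. Note also that $\Pi$ preserves the degree of $p$-forms, so each power remains a well-defined operator on $\sz{p}{\R^{n}}$.

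With this in hand, I would plug into Theorem \ref{thm:inversion}: writing $r^{k'}=(r^{2})^{l}$ and distributing over the sum $\tfrac{\Pi}{k-p}+\tfrac{\Phi}{n-p}$, each of the two resulting terms has the form $\F^{-1}\circ r^{2l}\,\Pi\circ\F$ or $\F^{-1}\circ r^{2l}\,\Phi\circ\F$ applied to $\dar_{k}\rad_{k}\a$. The inductive identity converts these into $(-4\pi^{2})^{-l}(\delta d)^{l}$ and $(-4\pi^{2})^{-l}(d\delta)^{l}$ respectively, yielding exactly the stated corollary. There is no genuine obstacle here beyond bookkeeping; the only mild subtlety worth flagging is that Proposition \ref{prop:symbols2} is stated on Schwartz $p$-forms, which is precisely the class in which $\a$ (and hence $\dar_{k}\rad_{k}\a$, by the convolution/smoothness considerations of \S\ref{sec:conv}) lives, so the iteration stays inside a space where $\F$ and $\F^{-1}$ are genuine mutual inverses.
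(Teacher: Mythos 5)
Your core argument is exactly the paper's: iterate Proposition \ref{prop:symbols2}, using the cancellation of the middle $\F\circ\F^{-1}$, the fact that $r^{2}$ is a scalar commuting with the projections, and the idempotency $\Pi^{l}=\Pi$, $\Phi^{l}=\Phi$, to get $\F^{-1}r^{2l}\Pi\,\F=\left(-\delta d/4\pi^{2}\right)^{l}$ and $\F^{-1}r^{2l}\Phi\,\F=\left(-d\delta/4\pi^{2}\right)^{l}$, and then substitute into Theorem \ref{thm:inversion}. That is the intended proof.

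One claim in your final paragraph is wrong, however, and it is precisely the point the paper is careful about: $\dar_{k}\rad_{k}\a$ is \emph{not} Schwartz, even when $\a$ is. The Gaussian example at the end of \S\ref{sec:rad} shows the dual transform produces only $O(r^{-2})$ and $O(r^{-4})$ decay from a rapidly decaying input, and the convolution formula (Prop.~\ref{prop:conForm}) confirms that $\dar_{k}\rad_{k}\a=c\,(r^{-k}\Pi)\star\a$ decays only like $r^{-k}$ (up to a log). So you cannot justify the last step by saying the iteration ``stays inside a space where $\F$ and $\F^{-1}$ are genuine mutual inverses'' as applied to $\dar_{k}\rad_{k}\a$; Proposition \ref{prop:symbols2} is stated on $\sz{p}{\R^{n}}$ and Theorem \ref{thm:inversion} applies $\F$ to $\dar_{k}\rad_{k}\a$ only in the tempered distributional sense. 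The repair is what the paper does: either view the iterated intertwining as an identity of (distributionally extended) operators, or perform the $r^{2l}$-iteration on the Fourier side before evaluating against the tempered distribution, and then invoke Remark \ref{rem:smooth} to guarantee that $\dar_{k}\rad_{k}\a$ is smooth enough (it inherits all derivatives of $\a$ through the convolution) for $(\delta d)^{l}$ and $(d\delta)^{l}$ to act on it classically. With that substitution your argument is complete and coincides with the paper's.
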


\begin{proof}
By the formulae in Proposition \ref{prop:symbols2}, $\,\F\,$ intertwines
$\,d\delta\,$ and $\,\delta d\,$ with multiplication by $\,r^{2}\Phi\,$ and $\,r^{2}\Pi\,$ respectively. 
But $\,\Phi\,$ and $\,\Pi\,$ are projections, so $\,\Phi^{l}=\Phi\,$ and $\,\Pi^{l}=\Pi\,$ for any 
integer $\,l>0\,$, whereby
\begin{eqnarray*}
	\F^{-1}r^{2l}\Phi\,\F
	&=& 
	\left(\F^{-1}r^{2}\Phi\,\F\right)^{l}
	=
	\left(-\frac{d\delta}{4\pi^{2}}\right)^{l}\,,\\
	\F^{-1}r^{2l}\Pi\,\F
	&=& 
	\left(\F^{-1}r^{2}\Pi\,\F\right)^{l}
	=
	\left(-\frac{\delta d}{4\pi^{2}}\right)^{l}\,.
\end{eqnarray*}
When $\,k'=2l\,$, these facts simplify our 
inversion formula in an obvious way. Remark \ref{rem:smooth} then
ensures that $\,\dar_{k}\rad_{k}\a\,$ is suitably differentiable,
and the corollary follows. 
\end{proof}

\begin{remark}\label{rem:p=0}
	The inversion and superposition results above all correctly reproduce their classical 
	scalar precedents when $\,p=0\,$, since $\,\Pi\,$ is the identity and $\,\Phi=0\,$ on 
	$\,\bw{0}(\R^{n})\,$. Indeed, one easily checks that these facts reduce the inversion 
	formula of Theorem 	\ref{thm:inversion} to \medskip
	\[
		f = \frac{\sv{n}}{\sv{k}}\,\F^{-1}\left(r^{k'}\,\F\left(\dar_{k}\,\rad_{k}f\right)\right)\,,
	\]
	\vskip 2pt
	for any Schwartz function $\,f\,$. For functions, $\,\delta d=0\,$ and $\,d\delta\,$ is the Laplace
	operator $\,\Delta\,$. So when $\,k'=2l\,$ is even and $\,p=0\,$, Corollary \ref{cor:even} 
	reduces to\smallskip
	\[
		f 
		= 
		\left(\frac{-1}{4\pi^{2}}\right)^{l}
		\frac{\sv{n}}{\sv{k}}\,{\Delta^{l}}
		\left(\dar_{k}\,\rad_{k}f\right)\,,
	\]
	\vskip 4pt
	in agreement with \cite[Thm.~6.2]{helgason}. 
			
	Indeed, when $\,p=0\,$ the inversion formula extends beyond $\,\sz{p}{\R^{n}}\,$ 
	to (for instance) the class of all continuous functions $\,\a\,$ with $\,O(r^{-n-\eps})\,$ decay near infinity,
	as mentioned above, and we expect the analogous extension to hold for $\,p>0\,$:

\end{remark}

\begin{conj}\label{conj:extend}
	The inversion formulae of Theorem \ref{thm:inversion} and Corollary
	\ref{cor:even} hold for all continuous $p$-forms $\,\a\,$ with $\,O(r^{-n-\eps})\,$ decay near infinity 
	for some $\,\eps>0\,$.
\end{conj}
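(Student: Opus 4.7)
My plan is to extend Theorem \ref{thm:inversion} from the Schwartz class $\,\sz{p}{\R^{n}}\,$ to the class $\,\mathcal{C}_{\eps}\,$ of continuous $p$-forms $\,\a\,$ with $\,\knorm{\a}{n+\eps}<\infty\,$, by reinterpreting the key steps of its proof in the tempered distributional sense. Once Theorem \ref{thm:inversion} is established in $\,\mathcal{C}_{\eps}\,$, Corollary \ref{cor:even} follows by applying the intertwining formulae of Proposition \ref{prop:symbols2} to the distributional identity, exactly as in the Schwartz case.

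First I would check that each ingredient survives the weakened decay hypothesis. Proposition \ref{prop:Rdecay} goes through with $\,s=n+\eps\,$, giving $\,\knorm{\rad_{k}\a(P,\,\cdot\,)}{n+\eps-k'}<\infty\,$. The proof of the Convolution formula (Proposition \ref{prop:conForm}) uses only integrability together with the averaging Lemmas \ref{lem:avg1} and \ref{lem:avg2}, so it adapts verbatim to yield
\[
\dar_{k}\,\rad_{k}\,\a \;=\; c_{1}\,(r^{-k}\,\Pi)\star\a
\]
as a pointwise identity of continuous $p$-forms, with the right-hand side decaying like $\,r^{-k}\,\log r\,$ at infinity (per the Remark following Proposition \ref{prop:conForm}). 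Both sides therefore represent tempered $p$-form distributions.

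Next I would take Fourier transforms in the distributional sense. Since $\,\a\,$ lies in $\,L^{1}\cap L^{\infty}\,$ as a $p$-form, $\,\F(\a)\,$ is bounded and continuous, while $\,\F(r^{-k}\Pi)\,$ is the locally integrable operator field supplied by Theorem \ref{thm:fpi}. Granting the distributional convolution rule $\,\F(T\star\a)=\F(T)\dit\F(\a)\,$ in this setting, I would then conclude
\[
\F(\dar_{k}\,\rad_{k}\,\a) \;=\; c_{1}\,\F(r^{-k}\Pi)\dit\F(\a).
\]
Multiplying pointwise by the inverse operator field from Theorem \ref{thm:fpi} solves for $\,\F(\a)\,$ as a tempered distribution, and applying $\,\F^{-1}\,$ distributionally returns $\,\a\,$, thereby establishing Theorem \ref{thm:inversion} throughout $\,\mathcal{C}_{\eps}\,$.

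The chief obstacle is justifying the distributional convolution rule when the second factor $\,\a\,$ has only polynomial decay rather than rapid decay; the Appendix observations treat only the Schwartz case. I would attack this by a double approximation: choose Schwartz $\,\a_{j}\to\a\,$ with $\,\knorm{\a_{j}-\a}{n+\eps/2}\to 0\,$ (via cutoff and mollification), apply the Schwartz convolution rule to each $\,\a_{j}\,$ to obtain $\,\F\bigl((r^{-k}\Pi)\star\a_{j}\bigr)=\F(r^{-k}\Pi)\dit\F(\a_{j})\,$, and pass to the limit in the tempered distribution topology. Dominated convergence (applied to $\,(r^{-k}\Pi)\star\a_{j}\to(r^{-k}\Pi)\star\a\,$ uniformly on compacta with uniform pointwise decay bounds) together with the weak-$\ast$ continuity of $\,\F\,$ should suffice, though the operator-valued nature of the kernel $\,r^{-k}\Pi\,$ demands careful bookkeeping.
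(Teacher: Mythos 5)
There is no proof in the paper to compare against: the statement you were given is Conjecture \ref{conj:extend}, which the author explicitly leaves open (``We leave their proofs, however, to colleagues more expert in the analytical techniques they seem to require''). So your proposal must be judged on its own merits. Its backbone is sound as far as it goes: the Convolution formula (Prop.~\ref{prop:conForm}) is already proved in the paper under exactly the hypothesis $\,\knorm{\a}{n}<\infty\,$, so nothing needs to be ``adapted'' there; since $\,\a\in L^{1}\,$, $\,\F(\a)\,$ is bounded and continuous, the product $\,\F(r^{-k}\Pi)\dit\F(\a)\,$ is a locally integrable operator-field-applied-to-form, and once the identity $\,\F\bigl((r^{-k}\Pi)\star\a\bigr)=\F(r^{-k}\Pi)\dit\F(\a)\,$ is in hand, solving for $\,\F(\a)\,$ and applying $\,\F^{-1}\,$ distributionally does recover $\,\a\,$. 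Your cutoff-and-mollify approximation, with a global weighted bound $\,|\a_{j}-\a|\le\delta_{j}(1+r)^{-n-\eps/2}\,$ and the estimate $\,\int|y|^{-k}(1+|x-y|)^{-n-\eps/2}dy<\infty\,$ to dominate the convolutions, is the right device for the convolution rule, and I see no obstruction to completing it; but as written it is still a sketch at precisely the step you yourself identify as the crux, so the argument is not yet a proof.

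The genuine gap is in your last sentence of the first paragraph: Corollary \ref{cor:even} does \emph{not} follow ``exactly as in the Schwartz case.'' The paper's proof of that corollary invokes Remark \ref{rem:smooth} to guarantee that $\,\dar_{k}\rad_{k}\a\,$ is classically differentiable to order $\,k'=2l\,$, which is unavailable when $\,\a\,$ is merely continuous; the convolution $\,(r^{-k}\Pi)\star\a\,$ need not be $\,C^{2l}\,$. To salvage the differential formula you must (i) interpret $\,(\delta d)^{l}\,$ and $\,(d\delta)^{l}\,$ as distributional operators, (ii) extend Proposition \ref{prop:symbols2} from $\,\sz{p}{\R^{n}}\,$ to tempered $p$-form distributions by duality (routine, but it must be said, since multiplication by $\,r^{2l}\Pi\,$ and $\,r^{2l}\Phi\,$ is legitimate on $\,\szd{}{\R^{n},\cdot\,,\cdot}\,$ only because these are polynomial operator fields by Observation \ref{obs:veewedge}), and (iii) acknowledge that what you obtain is the identity in the sense of distributions, not the pointwise statement that the scalar literature the conjecture is modeled on (Helgason Thm.~6.2, Rubin) actually proves, where the real analytic work---inverting the Riesz-type potential classically via hypersingular integrals or equivalent machinery---resides. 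If the conjecture is read as asserting the formulae in the classical sense for continuous $\,\a\,$, your proposal does not reach it; if it is read distributionally, your plan is plausible but needs the approximation step and the items (i)--(ii) written out in full.
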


Now recall again that when $\,\a\in\sz{p}{\R^{n}}\,$, so is $\,\F\a\,$. This fact makes it easy to show that
\begin{equation}\label{eqn:phi}
	\phi:=
		\F^{-1}
			\left(r^{k'}
				\left(
					\frac{\Pi}{k-p} + \frac{\Phi}{n-p}
				\right)
				\dit
				\F\a
			\right)
\end{equation}
is smooth, with an $\,O(r^{-n})\,$ decay estimate. In particular, $\,\phi\,$ is integrable on
any $k'$-plane, so that
\[
	\beta:= C_{n,k,p}\,\rad_{k}\phi
\]
(with $\,C_{n,k,p}\,$ as in Theorem \ref{thm:inversion}) is well-defined and smooth on $\,\can{n,k}\,$. 
A slightly better $\,O(r^{-n-\eps})\,$ decay bound for $\,\phi\,$ would combine with our Conjecture above
and our inversion formula to let us write
\begin{equation}\label{eqn:phi2}
	\phi = 
		C_{n,k,p}\,\F^{-1}
			\left(
				r^{k'}\left(
					\frac{\Pi}{k-p}+\frac{\Phi}{n-p}
				\right)
				\dit
				\F\left(
					\dar_{k}\rad_{k}\phi
				\right)
			\right).
\end{equation}
But the operator
\[
	\F^{-1}\left(
		r^{k'}\left(
			\frac{\Pi}{k-p}+\frac{\Phi}{n-p}
		\right)\right)
		\dit
		\F
\]
is clearly invertible, and hitting both sides of (\ref{eqn:phi2}) with its inverse, given the definition   
of $\,\phi\,$ in (\ref{eqn:phi}), would then give $\,\a=\dar_{k}\b\,$, making $\,\a\,$ a superposition 
of $k$-planar $p$-forms. So if Conjecture \ref{conj:extend} holds, so does

\begin{conj}[Superposition]\label{cor:super}
	Given $\,0\le p<k<n\,$, any $\,\a\in\sz{p}{\R^{n}}\,$ can be constructed as a
	superposition of $k$-planar $p$-forms by a formula of the type
	\[
		\a=C_{n,k,p}\dar_{k}\rad_{k}\phi\,,
	\]	
	where $\,\phi\,$ is given by (\ref{eqn:phi}) above.

\end{conj}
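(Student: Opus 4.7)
The plan is to make rigorous the outline immediately preceding the conjecture statement, decomposing the argument into a decay estimate for $\,\phi\,$, an application of the extended inversion of Conjecture \ref{conj:extend} to $\,\phi\,$ itself, and a final algebraic cancellation on the Fourier side.

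First I would show that $\,\phi\,$, as defined in (\ref{eqn:phi}), is smooth and satisfies $\,\knorm{\phi}{n+\eps}<\infty\,$ for some $\,\eps>0\,$. Setting $\,M:=r^{k'}\left(\Pi/(k-p)+\Phi/(n-p)\right)\,$, the integrand $\,M\dit\F\a\,$ decays rapidly at infinity, since $\,\F\a\in\sz{p}{\R^{n}}\,$ and $\,M\,$ grows only polynomially. By Observation \ref{obs:veewedge} (or equivalently Lemma \ref{lem:shd}), $\,M\,$ is the product of $\,r^{k'-2}\,$ with a homogeneous quadratic polynomial-valued operator: when $\,k'\,$ is even, $\,M\,$ is a smooth polynomial operator and the resulting $\,\phi\,$ is automatically Schwartz; when $\,k'\,$ is odd, $\,M\,$ is only finitely smooth at the origin, and one must convert that finite smoothness into polynomial decay of $\,\phi\,$ via standard Fourier decay estimates (integration by parts against a smooth cutoff separating $\,0\,$ from infinity). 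Either way, $\,\phi\,$ should land comfortably in the hypothesis class of Conjecture \ref{conj:extend}.

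Second, granting Conjecture \ref{conj:extend} and applying it to $\,\phi\,$ yields
\[
	\phi
	=
	C_{n,k,p}\,\F^{-1}\!\left(M\dit\F\!\left(\dar_{k}\rad_{k}\phi\right)\right),
\]
so $\,\F\phi = C_{n,k,p}\,M\dit\F(\dar_{k}\rad_{k}\phi)\,$. But the definition (\ref{eqn:phi}) also gives $\,\F\phi = M\dit\F\a\,$. Since Theorem \ref{thm:fpi} supplies pointwise invertibility of $\,M\,$ off the origin (here $\,p<k\,$), one may cancel $\,M\,$ to get $\,\F\a = C_{n,k,p}\,\F(\dar_{k}\rad_{k}\phi)\,$, whence $\,\a = C_{n,k,p}\,\dar_{k}\rad_{k}\phi = \dar_{k}\b\,$ with $\,\b:=C_{n,k,p}\rad_{k}\phi\,$. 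By Remark \ref{rem:superpose}, the right-hand side is a superposition over $\,\G{n,k}\,$ of $k$-planar $p$-forms, which is the conclusion sought.

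The main obstacle lies entirely in the first step together with Conjecture \ref{conj:extend}: producing a quantitative $\,O(r^{-n-\eps})\,$ decay bound for $\,\phi\,$ when the codimension $\,k'\,$ is odd, and extending our inversion formula from $\,\sz{p}{\R^{n}}\,$ to forms with only this polynomial decay. Both require genuine harmonic-analytic work---mollification arguments, careful handling of the tempered-distributional multiplier $\,M\,$ near its origin singularity, and decay-interpolation estimates---which is precisely why the authors state these results as conjectures rather than theorems. Once those two ingredients are in hand, the algebraic conclusion of the second step is purely formal.
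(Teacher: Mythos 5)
Your proposal follows essentially the same route as the paper's own (conditional) derivation: define $\,\phi\,$ by (\ref{eqn:phi}), apply the conjectural extended inversion formula (Conjecture \ref{conj:extend}) to $\,\phi\,$, and then cancel the multiplier $\,r^{k'}\left(\frac{\Pi}{k-p}+\frac{\Phi}{n-p}\right)\,$ --- your pointwise cancellation on the Fourier side is just the paper's inversion of the operator $\,\F^{-1}\circ M\circ\F\,$ in different clothing. You also correctly isolate the same two unproven ingredients the paper flags (the $\,O(r^{-n-\eps})\,$ decay of $\,\phi\,$, delicate when $\,k'\,$ is odd, and the extension of the inversion formula beyond the Schwartz class), which is precisely why the statement is a conjecture rather than a theorem.
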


As we have noted, this has been proven when $\,p=0\,$ (\cite[Thm.~6.2]{helgason}). 
In view of Remark \ref{rem:superpose}, it then analyzes the scalar-valued Schwartz 
function $\,\a\,$ as a superposition of ``$k'$-plane waves,'' i.e., 
functions invariant with respect to the translations generated by $\,k'\,$ independent vectors.
That fact accounts for the classical use of Radon transform methods to solve certain linear partial 
differential equations by the ``method of spherical means.'' (cf. \cite[\S7a]{helgason} and \cite[(1.1)]{rubin}).

Conjecture \ref{cor:super} would similarly let us analyze Schwarz $p$-forms as superpositions 
of $k$-planar forms. This in turn would allow us to reconstruct $p$-\emph{currents} from 
their projections. 

To explain this, we introduce the linear space $\,\E{p}{\R^{n}}\,$ comprising all smooth $p$-forms 
on $\,{\R^{n}}\,$. One topologizes $\,\E{p}{\R^{n}}\,$ in a standard way, so that a sequence in 
$\,\E{p}{\R^{n}}\,$ converges to zero if and only if its members, together with all their derivatives, 
converge uniformly to zero on any fixed compact subset of $\,\R^{n}\,$ (cf. \cite[Chap. V]{helgason}).

\begin{definition}[Currents]
	We call any continuous linear functional on $\,\E{p}{\R^{n}}\,$
	a \textbf{compactly supported $p$-current on $\,\R^{n}\,$}. 
\end{definition} 

One easily shows that any such current actually does have
compact support, in the sense that for some $\,R>0\,$, 
it vanishes on all smooth $p$-forms supported outside 
the ball $\,B_{R}(0)\,$. Since we can find a Schwarz $p$-form 
that agrees, on $\,B_{R}(0)\,$ with any given smooth $p$-form, 
it follows immediately that
 
\begin{obs}\label{obs:restrict}
	A compactly supported $p$-current 	is determined by its values on 
	$\,\sz{p}{\R^{n}}\subset\E{p}{\R^{n}}\,$.
\end{obs}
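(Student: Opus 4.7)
The plan is to reduce the observation to the vanishing statement: any compactly supported $p$-current $T$ that annihilates every form in $\sz{p}{\R^{n}}$ is identically zero on $\E{p}{\R^{n}}$. Applied to the difference of two compactly supported currents with equal restrictions to $\sz{p}{\R^{n}}$, this at once yields the claim.

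To establish the vanishing statement, I would invoke the compact support property just recorded above the observation: choose $R>0$ so that $T[\omega]=0$ whenever $\spt(\omega)\subset\R^{n}\setminus B_{R}(0)$. Then, given any $\omega\in\E{p}{\R^{n}}$, pick a smooth bump $\chi:\R^{n}\to[0,1]$ that is compactly supported and satisfies $\chi\equiv 1$ on an open neighborhood of $\overline{B_{R}(0)}$ (for instance, on $B_{R+1}(0)$). Decompose
\[
	\omega = \chi\omega + (1-\chi)\omega.
\]
The first summand is a smooth $p$-form with compact support, hence lies in $\sz{p}{\R^{n}}$, so $T[\chi\omega]=0$ by hypothesis. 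The second summand vanishes on the open neighborhood of $\overline{B_{R}(0)}$ where $\chi\equiv 1$, hence is supported in $\R^{n}\setminus B_{R}(0)$, so $T[(1-\chi)\omega]=0$ by the choice of $R$. Linearity of $T$ then gives $T[\omega]=0$.

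There is no genuine obstacle here: the argument is little more than a bookkeeping exercise combining the definition of compact support with the obvious inclusion of smooth, compactly supported $p$-forms into $\sz{p}{\R^{n}}$. The only point requiring minor care is insisting that $\chi\equiv 1$ on an \emph{open} neighborhood of the closed ball $\overline{B_{R}(0)}$, rather than merely on $B_{R}(0)$, so that the cut-off remainder $(1-\chi)\omega$ lies honestly in the complement of $\overline{B_{R}(0)}$ and the compact-support hypothesis on $T$ can be applied to it.
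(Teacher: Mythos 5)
Your argument is correct and is essentially the paper's own: the paper justifies the observation by noting that a Schwartz $p$-form can be chosen to agree with any given smooth $p$-form on the ball $B_{R}(0)$ outside of which the current vanishes, which is exactly the cutoff decomposition $\omega=\chi\omega+(1-\chi)\omega$ you make explicit. Your version, phrased as a vanishing statement via linearity, just spells out the same reasoning with the minor (and appropriate) care about cutting off on an open neighborhood of $\overline{B_{R}(0)}$.
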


Finally, note that we can map any such current $\,T\,$ into a $k$-plane 
$\,P\in\G{n,k}\,$, to produce a compactly supported 
$k$-current  called the \textbf{projection of $\,T\,$
into $\,P\,$}, and denoted by $\,P_{*}T$, via the simple rule
\begin{equation*}\label{eqn:projT}
	P_{*}T[\phi]:= T[P^{*}\phi]\quad
	\text{for all $\,\phi\in\E{p}{P}\,$}\,.
\end{equation*}
Here, as in our introduction, $\,P^{*}\phi\,$ denotes the 
pullback of $\,\phi\,$ from $\,P\,$ to $\,\R^{n}\,$ via the
orthogonal projection $\,\R^{n}\to P\,$. 

\begin{conj}\label{thm:proj}
	When $\,p<k< n\,$, each compactly supported $p$-current $\,T\,$ on 
	$\,\R^{n}\,$ is uniquely determined by its projections into
	all $k$-planes $\,P\in\G{n,k}\,$. Explicitly, for any 
	$\,\a\in\sz{p}{\R^{n}}\,$, we have
	\[
		T[\a]
		=
		\int_{G{n,k}}P_{*}T\left[\b(P,\,\cdot\,)\right]\ dP\ ,
	\]
	where $\,\b\,$ is the mapping
	constructed from $\,\a\,$ in Theorem \ref{cor:super}.
\end{conj}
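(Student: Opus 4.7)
The plan is to derive the formula by assuming Conjecture \ref{cor:super}, writing the test form $\a$ as a continuous superposition of $k$-planar $p$-forms, and then pushing $T$ inside the resulting integral over $\G{n,k}$. By Observation \ref{obs:restrict}, the compactly supported current $T$ is determined by its values on $\sz{p}{\R^{n}}\subset\E{p}{\R^{n}}$, so it suffices to establish the displayed identity for $\a\in\sz{p}{\R^{n}}$.

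Given such an $\a$, the first step invokes Conjecture \ref{cor:super} to produce $\phi$ and $\b=C_{n,k,p}\,\rad_{k}\phi$ with $\a=\dar_{k}\b$. Unfolding the definition of $\dar_{k}$ then writes
\[
	\a(x)=\int_{\G{n,k}}\b(P,Px)\Big|_{P}\,dP,
\]
and, as observed in Remark \ref{rem:superpose}, for each fixed $P$ the integrand $x\mapsto\b(P,Px)|_{P}$ is the pullback $P^{*}\psi_{P}$ of the $p$-form $\psi_{P}(\xi):=\b(P,\xi)$ on $P$ through orthogonal projection onto $P$. Next, I would interpret this identity as a vector-valued integral in $\E{p}{\R^{n}}$; applying $T$ and exchanging it with the grassmannian integral yields
\[
	T[\a]=\int_{\G{n,k}}T\!\left[P^{*}\psi_{P}\right]dP=\int_{\G{n,k}}P_{*}T\!\left[\b(P,\,\cdot\,)\right]dP,
\]
the second equality being the definition of the projected current. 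Uniqueness of $T$ from its family of projections follows at once, because every value $T[\a]$ is recovered from $\{P_{*}T\}_{P\in\G{n,k}}$.

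The one genuinely analytical step is the interchange of $T$ with the grassmannian integral. To justify it, I would check that $P\mapsto P^{*}\psi_{P}$ is continuous from the compact manifold $\G{n,k}$ into $\E{p}{\R^{n}}$ with its standard Fr\'echet topology, so that the grassmannian integral is a Bochner (or Pettis) integral in $\E{p}{\R^{n}}$ on which the continuous linear functional $T$ commutes with integration. That continuity should follow from the smoothness of $\b=C_{n,k,p}\,\rad_{k}\phi$ on $\can{n,k}$---inherited by differentiation under the integral from the Schwartz input $\phi$---combined with the smooth dependence of orthogonal projection on $P\in\G{n,k}$.

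The principal obstacle is not this last wrinkle but the fact that the argument is entirely conditional on Conjecture \ref{cor:super}, whose proof in turn hinges on Conjecture \ref{conj:extend}: extending the inversion formula of Theorem \ref{thm:inversion} from Schwartz $p$-forms to continuous $p$-forms with $O(r^{-n-\eps})$ decay. Once that extension is available, reconstruction of compactly supported currents from their projections reduces to the essentially formal manipulations sketched above.
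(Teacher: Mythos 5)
Your argument is essentially the paper's own proof (given modulo Conjecture \ref{conj:extend}): reduce to Schwartz forms via Observation \ref{obs:restrict}, invoke Conjecture \ref{cor:super} to write $\a=\dar_{k}\b$ with $\b=C_{n,k,p}\rad_{k}\phi$, recognize the integrand as the pullback $P^{*}\b(P,\cdot)$ (the paper drops the restriction bar via Remark \ref{rem:subbdl}), and pass $T$ through the grassmannian integral to obtain $\int P_{*}T[\b(P,\cdot)]\,dP$. Your extra care in justifying the interchange of $T$ with the integral (continuity of $P\mapsto P^{*}\b(P,\cdot)$ into $\E{p}{\R^{n}}$ and a vector-valued integral) only tightens a step the paper performs formally, so the proposal is correct and takes the same route.
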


\begin{proof}[Proof (modulo Conjecture \ref{conj:extend})]
	By Observation \ref{obs:restrict}, 
	it suffices to show that whenever $\,\a\in\sz{p}{\R^{n}}\,$, we can determine $\,T[\a]\,$
	from the projections $\,P_{*}T\,$ for every $\,P\in\G{n,k}\,$. But if Conjecture \ref{conj:extend}
	holds, then so does Conjecture \ref{cor:super}, and the latter constructs a $p$-form $\,\phi\,$ from 
	$\,\a\,$ for which the mapping $\,\b:=\rad_{k}\phi\,$ satisfies
	\[
		\a(x) 
		= 
		(\dar_{k}\b)(x) 
		= 
		\int_{\G{n,k}}\b(P,Px)\big|_{P}\ dP
	\]
	for every $\,x\in\R^{n}\,$. Since $\,\b\,$ lies in the image of $\,\rad_{k}\,$, 
	Remark \ref{rem:subbdl} makes the restriction $\,|_{P}\,$ superfluous in the formula 
	above, which we can thus rewrite as
	\[
		\a = 
		\int_{\G{n,k}}P^{*}\left(\b(P,\,\cdot\,)\right)\ dP\ .
	\]
	It would then follow that
	\begin{eqnarray*}
		T[\a]
		&=&
		T\left[\int_{\G{n,k}}P^{*}\left(\b(P,\,\cdot\,)\right)
		\ dP\right]\\
		&=&
		\int_{\G{n,k}}T\left[P^{*}\left(\b(P,\,\cdot\,)
		\right)\right]
		\ dP\\
		&=&
		\int_{\G{n,k}}P_{*}T\left[\b(P,\,\cdot\,)\right]\ dP\ ,
	\end{eqnarray*}
	proving the result.
\end{proof}


\section{Appendix: Technical lemmas}

To clarify the main logic of our exposition, we have deferred two technical facts about
convolution to this appendix.

%
%

We start by verifying that the notion of convolution extends
to the setting required by our inversion theorem.

\begin{obs}\label{obs:defcon}
When $\,F\in\sz{}{\R^n,V}\,$, 
the classical definition of $\,T\star F\,$ 
(see Item (\ref{itm:ftcnv}) in \S\ref{ssec:ft})
extends in a natural way to the case where $\,T\,$ is a 
tempered $(V,W)$-distribution. 
\end{obs}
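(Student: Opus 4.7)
The plan is to imitate the scalar template for extending convolution against a distribution: use translated reflections of $\,F\,$ as test fields, then apply $\,T\,$ pointwise in $\,y\,$.

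For each fixed $\,y\in\R^{n}\,$, first define $\,F_{y}:\R^{n}\to V\,$ by $\,F_{y}(x):=F(y-x)\,$. Since $\,F\,$ decays rapidly together with all its derivatives, so does $\,F_{y}\,$, and therefore $\,F_{y}\in\sz{}{\R^{n},V}\,$. The proposed extension is
\[
	(T\star F)(y) := T\bigl[F_{y}\bigr]\in W\,,\qquad y\in\R^{n}\,.
\]
The first thing I would check is consistency: when $\,T\,$ is itself represented by a Schwartz operator field via (\ref{eqn:rbi}), the substitution $\,u=y-x\,$ inside the integral instantly recovers the classical formula in item (\ref{itm:ftcnv}) of \S\ref{ssec:ft}. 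So the new definition extends the old.

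Next I would show that $\,y\mapsto F_{y}\,$ is a smooth map from $\,\R^{n}\,$ into $\,\sz{}{\R^{n},V}\,$, with $\,\partial F_{y}/\partial y_{j}=(\partial F/\partial x_{j})_{y}\,$. Since $\,T\,$ is continuous on $\,\sz{}{\R^{n},V}\,$, this gives both smoothness of $\,y\mapsto(T\star F)(y)\,$ and the companion identity $\,\partial_{y_{j}}(T\star F)=T\star(\partial_{j}F)\,$, so the convolution is at least a smooth $\,W$-valued field. To see that it is also tempered, I would invoke the standard fact that continuity of $\,T\,$ on $\,\sz{}{\R^{n},V}\,$ yields constants $\,C>0\,$ and $\,N\in\mathbf{Z}_{+}\,$ with
\[
	\bigl|T[G]\bigr|_{W}\le C\sum_{|\alpha|,|\beta|\le N}
	\sup_{x\in\R^{n}}\bigl|x^{\alpha}D^{\beta}G(x)\bigr|_{V}
	\qquad\text{for every }G\in\sz{}{\R^{n},V}\,.
\]
Applying this to $\,G=F_{y}\,$ and expanding $\,x^{\alpha}=\bigl((x-y)+y\bigr)^{\alpha}\,$ binomially forces $\,|(T\star F)(y)|_{W}\le P(|y|)\,$ for some polynomial $\,P\,$ whose coefficients depend only on finitely many Schwartz seminorms of $\,F\,$.

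The main nuisance will be the last step: tracking how the Schwartz seminorms of $\,F_{y}\,$ depend on $\,y\,$ requires a slightly careful binomial expansion together with the chain rule on translates. It is routine and introduces no genuinely new analytic input, but it is where the bookkeeping lives. Once that growth bound is in hand, $\,T\star F\,$ is a tempered $\,W$-field that agrees with the classical convolution wherever both are defined---exactly the form of extension invoked in the proof of our Inversion Theorem \ref{thm:inversion}.
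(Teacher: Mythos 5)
Your construction is correct, and it reaches the Observation by a genuinely different route than the paper. You extend the convolution \emph{pointwise}: since $T$ is by definition a continuous functional $\sz{}{\R^{n},V}\to W$ and each reflected translate $F_{y}$ lies in $\sz{}{\R^{n},V}$, you set $(T\star F)(y):=T[F_{y}]$, check consistency with (\ref{eqn:rbi}) by the substitution $u=y-x$, and then use continuity (a finite-seminorm bound) to get smoothness, the identity $\partial_{j}(T\star F)=T\star\partial_{j}F$, and polynomial growth in $y$; this is the classical ``convolution of a distribution with a test function is a smooth, slowly growing function'' argument, and every step goes through in the vector-valued setting. The paper instead extends $T\star F$ \emph{by duality}: it introduces the tensor identification $F_{\otimes}$, proves the adjoint identities (\ref{eqn:ot}) and (\ref{eqn:cnv1}), i.e.\ $(T\star F)[G]=T[F_{\otimes}^{-}\star G]$ for Schwartz $T$, and then takes the right-hand side as the definition for distributional $T$, so that $T\star F$ is produced directly as a functional on Schwartz test fields $G$. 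Your route buys more information (you see that $T\star F$ is an honest smooth $W$-valued field of polynomial growth, not merely a distribution), and it uses the space $\szd{}{\R^{n},V,W}$ exactly as the paper defines it; the paper's route buys less regularity but sets up precisely the transpose formula (\ref{eqn:cnv1}) on which the proof of the Fourier convolution rule (Observation \ref{obs:conv}) is built. So if you adopt your definition, you should add one remark before that Observation and its use in Theorem \ref{thm:inversion}: either verify that your pointwise extension and the dual extension agree (standard, by commuting $T$ with the vector-valued integral over $y$), or reprove $\F(T\star F)=\F(T)\dit\F(F)$ directly from your definition.
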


\begin{proof}
To do this, we identify vectors with operators in a somewhat 
unusual way.

Let $\,v^{*}\,$ denote the image of $\,v\in V\,$ 
under the canonical duality $\,V\sim V^{*}\,$.
We then let $\,v_{\otimes}:W\to\mathrm{Hom}(V,W)\,$ signify 
the operator we obtain by setting
\begin{equation*}
	\left(v_{\otimes}(w)\right)(x) := (w\otimes v^{*})(x) 
	:= v^{*}(x)\,w\in W
\end{equation*}
for all $\,x\in V\,$ and $\,w\in W\,$. 
We can then identify any $V$-field $\,F\,$
on $\,\R^{n}\,$ with the field $\,F_{\otimes}\,$ given by
\begin{equation*}\label{eqn:ftens}
	x\mapsto F_{\otimes}(x):= F(x)_{\otimes}\ .
\end{equation*}
A routine calculation shows that whenever 
$\,T\in\sz{}{\R^{n},V,W}\,$, $\,F\in\sz{}{\R^{n},V}\,$ and
$\,G\in\sz{}{\R^{n}, W}\,$, we have 
\begin{equation}\label{eqn:ot}
	(T\dit F)[G] = T[G\otimes F^{*}]\ .
\end{equation}
Using this fact with a bit more calculation then reveals that
\begin{equation}\label{eqn:cnv1}
(T\star F)[G]=T[F_{\otimes}^{-}\star G]\ ,
\end{equation}
where the superscript ``$-$'' indicates reflection through the origin, e.g.,
\begin{equation*}\label{eqn:x-x}
 F^{-}(x):=F(-x)\ .
\end{equation*}

Since $\,F_{\otimes}^{-}\star G\,$ lies in 
$\,\sz{}{\R^n,V,W}\,$
(\ref{eqn:cnv1}) now {defines} $\,T\star F\,$ for any 
$\,T\in\szd{}{\R^n,V,W}\,$, as promised.
\end{proof}

We next show that the Fourier convolution rule 
(Item (\ref{itm:ftcnv}) of \S\ref{ssec:ft} again)
extends to convolutions of the type just described:

\begin{obs}\label{obs:conv}
The Fourier convolution/product rule 
\[
\F\left(T\star F\right) = \F(T)\dit\F(F)
\]
holds when $\,T\in\szd{}{\R^{n},V,W}\,$ and $\,F\in\sz{}{\R^{n},V}\,$.
\end{obs}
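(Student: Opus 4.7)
The plan is to test both sides against an arbitrary $G\in\sz{}{\R^n,W}$, which reduces the distributional statement to a purely classical identity between Schwartz fields.

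First, I would unfold the left side. By the distributional definition of $\F$ in (\ref{eqn:ft*}), $\F(T\star F)[G] = (T\star F)[\F G]$, and then by the extension of convolution (formula (\ref{eqn:cnv1}) in Observation \ref{obs:defcon}), this equals $T\!\left[F_\otimes^{-}\star \F G\right]$. On the right side, the pairing identity (\ref{eqn:ot}) gives $(\F T\dit \F F)[G] = \F T[G\otimes(\F F)^{*}]$, and a second application of (\ref{eqn:ft*}) turns this into $T\!\left[\F(G\otimes(\F F)^{*})\right]$. Since $T$ and $G$ are arbitrary, everything reduces to the Schwartz-field identity
\begin{equation*}
	F_\otimes^{-}\star \F G
	\;=\;
	\F\!\left(G\otimes(\F F)^{*}\right)
	\quad\text{in}\quad
	\sz{}{\R^n,V,W}.
\end{equation*}

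To prove this residual identity I would pick orthonormal bases $\{e_i\}\subset V$ and $\{w_j\}\subset W$ and expand $F=\sum_i f_i\,e_i$, $G=\sum_j g_j\,w_j$ with $f_i,g_j\in\sz{}{\R^n}$. Unwinding the definitions $F_\otimes(x)(w)(v)=\langle v,F(x)\rangle\,w$ and $(G\otimes (\F F)^{*})(x)(v)=\langle v,\F F(x)\rangle\,G(x)$ shows that both sides decompose as $\sum_{i,j}\phi_{ij}\, w_j\otimes e_i^{*}$, with
\begin{equation*}
	\phi_{ij}^{\text{left}}=f_i^{-}\star \F g_j,
	\qquad
	\phi_{ij}^{\text{right}}=\F(\F f_i\cdot g_j)\ .
\end{equation*}
These agree by the classical scalar convolution-product duality for $\F$, itself an immediate consequence of Fubini (or, equivalently, of applying Fourier inversion to $\F(h\star k)=\F h\cdot\F k$).

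The only real obstacle is bookkeeping: one must confirm that the operator $F_\otimes$ of Observation \ref{obs:defcon} contracts against the $W$-valued field $\F G$ in exactly the same bilinear way that the $\mathrm{Hom}(V,W)$-valued field $G\otimes(\F F)^{*}$ pairs $V$ with $W$-valued outputs. Once those conventions are aligned, the classical scalar identity closes the argument, and no additional analytic input is required since every manipulation in the distributional string above is a direct appeal either to a definition or to a formula established in Observation \ref{obs:defcon}.
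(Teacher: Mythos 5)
Your proposal is correct and follows the paper's own proof essentially step for step: you test both sides against $G\in\sz{}{\R^{n},W}$ and use (\ref{eqn:ft*}), (\ref{eqn:ot}) and (\ref{eqn:cnv1}) to reduce everything to the Schwartz-level identity $F_{\otimes}^{-}\star\F(G)=\F\left(G\otimes\F(F)^{*}\right)$, exactly as the paper does. The only (harmless) divergence is in verifying that residual identity: the paper argues coordinate-free via $F_{\otimes}^{-}\star\F(G)=\F^{-1}\left(\F(F_{\otimes}^{-})\dit\F^{2}(G)\right)=\F\left(\F(F)_{\otimes}^{-}\dit G^{-}\right)^{-}$ using the Schwartz convolution rule and $\F\F=(\,\cdot\,)^{-}$, whereas you expand in orthonormal bases and reduce to the scalar identity $f^{-}\star\F g=\F(\F f\cdot g)$; both routes are routine and equivalent.
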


\begin{proof}
In view of (\ref{eqn:ot}) and (\ref{eqn:ft*}), 
we have to show that for any $\,G\in\sz{}{\R^{n},W}\,$, 
\begin{equation}\label{eqn:mk1}
	\left(T\star F\right)[\F(G)] 
	= T\left[\F\bigl(G\otimes\F(F)^{*})\right]\ .
\end{equation}
We start by using (\ref{eqn:cnv1}) to write
\[
\left(T\star F\right)\left[\F(G)\right]
=T\left[F_{\otimes}^{-}\star\F(G)\right]\ .
\]
Since $\,F\,$ and $\,G\,$ are both Schwartz, properties (\ref{itm:ftsz}), (\ref{itm:ftiso}) and (especially) 
the convolution/product rule
(\ref{itm:ftcnv}) listed above, together show that
\begin{eqnarray*}
F_{\otimes}^{-}\star\F(G) 
&=& \F^{-1}\left(\F(F^{-}_{\otimes})\dit \F^{2}(G)\right)\\
&=& \F\left(\F(F)^{-}_{\otimes}\dit G^{-}\right)^{-}\\
&=& \F\left(G\otimes\F(F)^{*})\right)\ .
\end{eqnarray*}
The desired fact (\ref{eqn:mk1}) follows immediately.
\end{proof}

\label{sec:tech}



\vfill

\end{document}